\newtheorem{theorem}{Theorem}[section]
\newtheorem{lemma}[theorem]{Lemma}
\newtheorem{proposition}[theorem]{Proposition}
\theoremstyle{definition}
\newtheorem{question}[theorem]{Question}
\numberwithin{equation}{section}
\newcommand{\id}{{\rm id}}
\newcommand{\cW}{{\mathcal W}}
\newcommand{\cC}{{\mathcal C}}
\newcommand{\cU}{{\mathcal U}}
\newcommand{\cR}{{\mathcal R}}
\newcommand{\cP}{{\mathcal P}}
\newcommand{\cZ}{{\mathcal Z}}
\newcommand{\cV}{{\mathcal V}}
\newcommand{\sP}{{\mathscr P}}
\newcommand{\Cb}{{\mathbb C}}
\newcommand{\Zb}{{\mathbb Z}}
\newcommand{\Nb}{{\mathbb N}}
\newcommand{\eps}{\varepsilon}
\newcommand{\unit}{\mathbf{1}}
\DeclareMathOperator{\Act}{Act}
\DeclareMathOperator{\frmin}{FrMin}
\newcommand{\N}{\mathbb{N}}
\newcommand{\symd}{\triangle}
\newcommand{\acts}{\curvearrowright}
\newcommand{\ubar}[1]{\text{\b{$#1$}}}
\newcommand{\lBD}{\ubar{D}}
\newcommand{\uBD}{\bar{D}}
\newcommand{\suchthat}{:}
\newcommand{\from}{\colon}
\newcommand{\mathand}{\mbox{ and }}
\newcommand{\partialto}{\rightharpoonup}
\newcommand{\aug}{\mathrm{aug}}
\newcommand{\cG}{\mathcal{G}}
\newcommand{\pmp}{p{$.$}m{$.$}p{$.$}}
\DeclareMathOperator{\graph}{graph}
\DeclareMathOperator{\domain}{dom}
\begin{document}

\author[Conley]{Clinton T.~Conley}
\author[Jackson]{Steve C.~Jackson}
\author[Kerr]{David Kerr}
\author[Marks]{Andrew S.~Marks}
\author[Seward]{Brandon Seward}
\author[Tucker-Drob]{Robin D.~Tucker-Drob}

\address{\hskip-\parindent
Clinton T.~Conley, Department of Mathematical Sciences,
Carnegie Mellon University, Pittsburgh, PA 15213, U.S.A.}
\email{clintonc@andrew.cmu.edu}

\address{\hskip-\parindent
Steve Jackson, Department of Mathematics,
University of North Texas,
Denton, TX 76203-5017, U.S.A.}
\email{jackson@unt.edu}

\address{\hskip-\parindent
David Kerr, Department of Mathematics, Texas A{\&}M University,
College Station, TX 77843-3368, U.S.A.}
\email{kerr@math.tamu.edu}

\address{\hskip-\parindent
Andrew Marks, UCLA Department of Mathematics,
Los Angeles, CA 90095-1555, U.S.A.}
\email{marks@math.ucla.edu}

\address{\hskip-\parindent
Brandon Seward, Courant Institute of Mathematical Sciences,
New York, NY 10012, U.S.A.}
\email{bseward@cims.nyu.edu}

\address{\hskip-\parindent
Robin Tucker-Drob, Department of Mathematics, Texas A{\&}M University,
College Station, TX 77843-3368, U.S.A.}
\email{rtuckerd@math.tamu.edu}

\title{F{\o}lner tilings for actions of amenable groups}

\date{November 9, 2017}

\begin{abstract}
We show that every probability-measure-preserving action of a countable amenable group $G$
can be tiled, modulo a null set, using finitely many finite subsets
of $G$ (``shapes'') with prescribed approximate
invariance so that the collection of tiling centers for each shape is Borel.
This is a dynamical version of the Downarowicz--Huczek--Zhang tiling theorem for countable amenable groups
and strengthens the Ornstein--Weiss Rokhlin lemma. As an application we prove that, for every
countably infinite amenable group $G$, the crossed product of a generic free minimal action
of $G$ on the Cantor set is $\cZ$-stable.
\end{abstract}

\maketitle

\section{Introduction}

A discrete group $G$ is said to be {\it amenable} if it admits a finitely additive
probability measure which is invariant under the action of $G$ on itself by left translation,
or equivalently if there exists a unital positive linear functional $\ell^\infty (G)\to\Cb$
which is invariant under the action of $G$ on $\ell^\infty (G)$ induced by left translation
(such a functional is called a {\it left invariant mean}).
This definition was introduced by von Neumann in connection with the Banach--Tarski paradox
and shown by Tarski to be equivalent to the absence of paradoxical decompositions of the group.
Amenability has come to be most usefully leveraged through its
combinatorial expression as the F{\o}lner property,
which asks that for every finite set $K\subseteq G$ and
$\delta > 0$ there exists a nonempty finite set $F\subseteq G$ which is {\it $(K,\delta )$-invariant}
in the sense that $|KF\Delta F| < \delta |F|$.

The concept of amenability appears as a common thread throughout much of ergodic theory as well as the
related subject of operator algebras, where it is known via a number of avatars like
injectivity, hyperfiniteness, and nuclearity.
It forms the cornerstone of the theory of orbit equivalence,
and also underpins both Kolmogorov--Sinai entropy and the classical ergodic theorems,
whether explicitly in their most general formulations or implicitly
in the original setting of single transformations (see Chapters~4 and 9 of \cite{KerLi17}).
A key tool in applying amenability to dynamics is the Rokhlin lemma of Ornstein and Weiss,
which in one of its simpler forms says that for every free probability-measure-preserving action
$G\curvearrowright (X,\mu )$ of a countably infinite amenable group
and every finite set $K\subseteq G$ and $\delta > 0$ there exist $(K,\delta )$-invariant finite sets
$T_1 , \dots , T_n \subseteq G$ and measurable sets $A_1 , \dots , A_n \subseteq X$
such that the sets $sA_i$ for $i=1,\dots , n$ and $s\in T_i$ are pairwise disjoint
and have union of measure at least $1-\delta$ \cite{OrnWei87}.

The proportionality in terms of which approximate invariance is expressed in the F{\o}lner condition
makes it clear that amenability is a measure-theoretic property, and it is not surprising
that the most influential and definitive applications of these ideas in dynamics
(e.g., the Connes--Feldman--Weiss theorem) occur in the presence
of an invariant or quasi-invariant measure. Nevertheless, amenability also has significant
ramifications for topological dynamics, for instance in guaranteeing the existence
of invariant probability measures when the space is compact and in providing the basis
for the theory of topological entropy. In the realm of operator algebras, similar comments can be made
concerning the relative significance of amenability for von Neumann algebras (measure)
and C$^*$-algebras (topology).

While the subjects of von Neumann algebras and C$^*$-algebras have long enjoyed a symbiotic relationship
sustained in large part through the lens of analogy, and a similar relationship
has historically bound together ergodic theory and topological dynamics,
the last few years have witnessed the emergence of a new and structurally more direct
kind of rapport between topology and measure in these domains, beginning on the operator algebra side
with the groundbreaking work of Matui and Sato on strict comparison, $\cZ$-stability,
and decomposition rank \cite{MatSat12,MatSat14}.
On the side of groups and dynamics, Downarowicz, Huczek, and Zhang recently showed
that if $G$ is a countable amenable group then for every finite set $K\subseteq G$ and $\delta > 0$
one can partition (or ``tile'') $G$ by left translates of finitely many $(K,\delta )$-invariant
finite sets \cite{DowHucZha16}.
The consequences that they derive from this tileability are topological and include the
existence, for every such $G$, of a free minimal action with zero entropy.
One of the aims of the present paper is to provide some insight into how these advances
in operator algebras and dynamics, while seemingly unrelated at first glance,
actually fit together as part of a common circle of ideas that we expect, among other things,
to lead to further progress in the structure and classification theory of crossed product C$^*$-algebras.

Our main theorem is a version of the
Downarowicz--Huczek--Zhang tiling result for free p.m.p.\ (probability-measure-preserving) actions
of countable amenable groups which strengthens the Ornstein--Weiss Rokhlin lemma
in the form recalled above by shrinking the leftover piece down to a null set
(Theorem~\ref{T-main}). As in the case of groups, one does not expect
the utility of this dynamical tileability to be found in the measure setting,
where the Ornstein--Weiss machinery generally suffices, but rather in the derivation of
topological consequences.
Indeed we will apply our tiling result to show that,
for every countably infinite amenable group $G$,
the crossed product $C(X)\rtimes G$ of a generic free minimal action $G\curvearrowright X$ on the Cantor set
possesses the regularity property of $\cZ$-stability (Theorem~\ref{T-comeager}).
The strategy is to first prove that such an action admits clopen tower decompositions
with arbitrarily good F{\o}lner shapes (Theorem~\ref{T-dense G delta}),
and then to demonstrate that the existence of such
tower decompositions implies that the crossed product is $\cZ$-stable (Theorem~\ref{T-Z-stability}).
The significance of $\cZ$-stability within the classification program for simple separable nuclear C$^*$-algebras
is explained at the beginning of Section~\ref{S-Z-stability}.

It is a curious irony in the theory of amenability that the Hall--Rado matching theorem
can be used not only to show that the failure of the F{\o}lner property for a discrete group
implies the formally stronger Tarski characterization of nonamenability in terms of
the existence of paradoxical decompositions \cite{CecGriHar99}
but also to show, in the opposite direction, that the F{\o}lner property itself implies the formally
stronger Downarowicz--Huczek--Zhang characterization of amenability which guarantees the existence of
tilings of the group by translates of finitely many F{\o}lner sets \cite{DowHucZha16}.
This Janus-like scenario will be reprised here in the dynamical context through the use
of a measurable matching argument of Lyons and Nazarov that was originally developed
to prove that for every simple bipartite nonamenable Cayley graph of a discrete group $G$
there is a factor of a Bernoulli action of $G$ which is an a.e.\ perfect matching of the graph \cite{LyoNaz11}.
Accordingly the basic scheme for proving Theorem~\ref{T-main} will be the same as that of
Downarowicz, Huczek, and Zhang and divides into two parts:
\begin{enumerate}
\item using an Ornstein--Weiss-type argument
to show that a subset of the space of lower Banach density close to one can be tiled
by dynamical translates of F{\o}lner sets, and

\item using a Lyons--Nazarov-type measurable matching to distribute almost all
remaining points to existing tiles with only a small proportional increase in the size of
the F{\o}lner sets, so that the approximate invariance is preserved.
\end{enumerate}

We begin in Section~\ref{S-matchings} with the
measurable matching result (Lemma~\ref{lem:expansive}), which is a variation on the Lyons--Nazarov theorem from \cite{LyoNaz11}
and is established along similar lines.
In Section~\ref{S-tilings} we establish the appropriate variant of the Ornstein--Weiss
Rokhlin lemma (Lemma~\ref{lem:bdense}) and put everything together in
Theorem~\ref{T-main}.
Section~\ref{S-Cantor} contains the genericity result for free minimal actions on the
Cantor set, while Section~\ref{S-Z-stability} is devoted to the material on $\cZ$-stability.
\medskip

\noindent{\it Acknowledgements.}
C.C. was partially supported by NSF grant DMS-1500906. D.K. was partially supported by NSF grant DMS-1500593.
Part of this work was carried out while
he was visiting the Erwin Schr{\"o}dinger Institute (January--February 2016)
and the Mittag--Leffler Institute (February--March 2016). A.M. was
partially supported by NSF grant DMS-1500974. B.S. was partially supported by ERC grant 306494.
R.T.D. was partially supported by NSF grant DMS-1600904.
Part of this work was carried out during the AIM SQuaRE: Measurable Graph Theory.

\section{Measurable matchings}\label{S-matchings}

Given sets $X$ and $Y$ and a subset $\cR \subseteq X \times Y$, with each $x \in X$ we associate its \emph{vertical section} $\cR_x = \{y \in Y \suchthat (x,y)\in \cR\}$ and with each $y \in Y$ we associate its \emph{horizontal section} $\cR^y = \{x \in X \suchthat (x,y) \in \cR\}$.  Analogously, for $A \subseteq X$ we put $\cR_A = \bigcup_{x \in A} \cR_x = \{y \in Y \suchthat \exists x \in A\ (x,y) \in \cR\}$.  We say that $\cR$ is \emph{locally finite} if for all $x \in X$ and $y \in Y$ the sets $\cR_x$ and $\cR^y$ are finite.

If now $X$ and $Y$ are standard Borel spaces equipped with respective Borel measures $\mu$ and $\nu$, we say that $\cR \subseteq X \times Y$ is \emph{$(\mu,\nu)$-preserving} if whenever $f \from A \to B$ is a Borel bijection between subsets $A \subseteq X$ and $B \subseteq Y$ with $\graph(f) \subseteq \cR$ we have $\mu(A) = \nu(B)$.  We say that $\cR$ is \emph{expansive} if there is some $c > 1$ such that for all Borel $A \subseteq X$ we have $\nu(\cR_A) \geq c\mu(A)$.

We use the notation $f \from X \partialto Y$ to denote a partial function from $X$ to $Y$.  We say that such a partial function $f$ is \emph{compatible} with $\cR \subseteq X \times Y$ if $\graph(f) \subseteq \cR$.

\begin{proposition}[ess.~Lyons--Nazarov {\cite[Theorem 1.1]{LyoNaz11}}] \label{prop:match}
  Suppose that $X$ and $Y$ are standard Borel spaces, that $\mu$ is a Borel probability measure on $X$, and that $\nu$ is a Borel measure on $Y$.  Suppose that $\cR \subseteq X \times Y$ is Borel, locally finite, $(\mu,\nu)$-preserving, and expansive.  Then there is a $\mu$-conull $X' \subseteq X$ and a Borel injection $f \from X' \to Y$ compatible with $\cR$.
\end{proposition}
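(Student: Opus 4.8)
The plan is to build the Borel injection $f$ by a transfinite/iterative augmenting-path construction analogous to the proof of the Lyons--Nazarov theorem, producing an increasing sequence of compatible Borel partial injections whose domains exhaust a conull set. First I would fix $c > 1$ witnessing expansiveness and set up the combinatorial framework: for a compatible Borel partial injection $f \from X \partialto Y$, call $x \in X \setminus \dom(f)$ \emph{reachable} if there is a finite alternating path $x = x_0, y_0, x_1, y_1, \dots, x_k, y_k$ with $(x_i, y_i) \in \cR$, $f(x_{i+1}) = y_i$, and $y_k \notin \image(f)$; following such a path and flipping edges along it extends $f$ to include $x$ in its domain while keeping it a compatible Borel partial injection (the standard augmenting-path move for matchings). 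Local finiteness of $\cR$ ensures each such path search takes place in a locally finite graph, so reachability within bounded path-length is a Borel condition, and the flips can be performed $\mu$-measurably by a Borel selection/marker argument (e.g.\ using a Borel linear order on $X \sqcup Y$ to choose canonical shortest augmenting paths).

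The key quantitative step is to show that if $f$ is a compatible Borel partial injection and $\mu(X \setminus \dom(f)) > 0$, then a nonnegligible fraction of $X \setminus \dom(f)$ is reachable, so that after augmenting, the measure of the undominated set strictly decreases by a definite proportion — giving convergence of the iteration to a conull domain. Here is where expansiveness and the $(\mu,\nu)$-preserving hypothesis are combined: if $A \subseteq X \setminus \dom(f)$ is the set of points \emph{not} reachable by any augmenting path, then letting $R$ be the set of $y \in Y$ connected to $A$ by an alternating path that stays inside $\image(f)$, one shows $\cR_A \subseteq R \subseteq \image(f)$ and that the partial inverse $f^{-1}$ restricted to $R$ maps into the ``$A$-reachable'' vertices of $X$; using that $\cR$ is $(\mu,\nu)$-preserving one gets $\nu(R) = \mu(f^{-1}(R))$, while $f^{-1}(R) \cap A = \emptyset$ forces $\nu(\cR_A) \le \nu(R) \le \mu(\dom(f)) $ in a way that, when iterated against $\nu(\cR_A) \ge c\,\mu(A)$, bounds $\mu(A)$ away from $\mu(X \setminus \dom(f))$. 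More carefully, one runs the augmenting process not once but to a \emph{limit}: define $f$ to be ``reduced'' if no augmenting path of any finite length exists; take an increasing $\omega$-sequence of augmentations converging to such a reduced $f$ (measurability passes to the increasing union). For a reduced $f$, the set $A = X \setminus \dom(f)$ and its alternating-reachable set $R \subseteq Y$ satisfy $\cR_A \subseteq R$, $R \subseteq \image(f)$, and $f^{-1}(R) \subseteq \dom(f) \setminus A$ with $\cR_{f^{-1}(R)} \subseteq R$; the $(\mu,\nu)$-preserving property then yields $\nu(R) = \mu(f^{-1}(R)) \le \mu(X) = 1$ but more to the point $\nu(R) = \mu(f^{-1}(R))$ and $f^{-1}(R) \supseteq$ nothing of $A$, while expansiveness gives $\nu(R) \ge \nu(\cR_A) \ge c\,\mu(A)$. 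Combining with the fact that $R$ is ``saturated'' (every $\cR$-neighbour of $f^{-1}(R)$ lies in $R$) and $(\mu,\nu)$-preservation applied to $f\restriction f^{-1}(R)$ gives $\mu(f^{-1}(R)) = \nu(R) \ge c\,\mu(A)$; but one also derives $\mu(f^{-1}(R)) \le \mu(A') $ type inequalities forcing $\mu(A) = 0$. I will need to assemble these inclusions precisely so that the measure-preservation cancels the factor correctly and concludes $\mu(A)=0$, i.e.\ $\dom(f)$ is conull.

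The main obstacle I anticipate is the \emph{measurability and convergence} of the augmenting-path iteration, rather than the combinatorics, which is classical finite matching theory. Specifically: (i) verifying that ``$x$ is reachable by an augmenting path of length $\le n$'' defines a Borel set uniformly in $n$ and in the current $f$, which needs local finiteness plus a Borel coding of paths; (ii) performing the simultaneous flips along a Borel family of vertex-disjoint augmenting paths in a single measurable step — one must choose, Borel-measurably, a maximal family of disjoint shortest augmenting paths, which is where a Borel linear ordering or a hyperfiniteness/marker argument enters; and (iii) ensuring the increasing union of the resulting partial injections is still compatible and that the undominated set shrinks to measure zero in the limit, which is exactly the quantitative estimate above and uses both hypotheses on $\cR$ in an essential way. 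Once the reduced $f$ is obtained and shown to have conull domain, restricting to $X' = \dom(f)$ completes the proof. I would remark that this is a streamlined rerun of \cite[Theorem~1.1]{LyoNaz11}, the only genuine change being that we work with an abstract $(\mu,\nu)$-preserving expansive relation instead of a measured graphing coming from a group action, so the path-flip and measure-counting arguments there transfer essentially verbatim.
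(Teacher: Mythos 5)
Your combinatorial skeleton (augmenting paths, simultaneous Borel flips via colorings of the path space, and a counting argument that plays expansivity against $(\mu,\nu)$-preservation) is the same as the paper's, but there is a genuine gap in your limit step. You propose to take ``an increasing $\omega$-sequence of augmentations converging to a reduced $f$'' and assert that ``measurability passes to the increasing union.'' The partial injections produced by augmenting-path flips are \emph{not} increasing as functions: a flip changes the value of the matching at every point already matched along the path, so only the domains grow, and there is no increasing union to take. The pointwise limit exists only on the set where the values eventually stabilize, and showing that this set is $\mu$-conull is precisely the quantitative heart of the argument, which your sketch does not supply (the claim that each round shrinks the unmatched set ``by a definite proportion'' is neither proved nor enough by itself, since it says nothing about how much of the already-matched part gets rewired at each round). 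The paper handles this by two estimates: if $g$ has no augmenting paths of length less than $n$ then $\mu(X\setminus\domain(g))\leq c^{-n}$ (iterating $\nu(\cR_{A_i})\geq c\mu(A_i)$ and $(\mu,\nu)$-preservation along $A_{i+1}=A_i\cup g^{-1}(\cR_{A_i})$), and the Elek--Lippner-type flip lemma can be arranged so that $\mu(\{x: g'(x)\neq g(x)\})\leq n\,\mu(X\setminus\domain(g))$ (each changed point is charged, at most $n$-to-$1$, to the origin of the first path along which it was flipped). Together these give $\mu(\{f_{n+1}\neq f_n\})\leq (n+1)c^{-n}$, which is summable, and Borel--Cantelli then yields a.e.\ stabilization; this is the step your proposal is missing.

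A secondary point: your plan to pass to a fully ``reduced'' $f$ (no augmenting paths of any length) and then run a saturation/Hall-type count on the alternating-reachable set $R$ is both harder to justify (reducedness of the limit needs the same stabilization control, plus an argument that no augmenting path survives the limit) and unnecessary. Once you know $f_n$ admits no augmenting paths of length less than $n$, the bound $\mu(X\setminus\domain(f_n))\leq c^{-n}$ already forces the leftover to vanish in the limit, so no fully reduced matching is ever needed; and your one-step inequality as written ($\nu(R)\geq c\mu(A)$ together with $\nu(R)=\mu(f^{-1}(R))\leq 1$) does not by itself force $\mu(A)=0$ --- you would have to apply expansivity to the saturated set $A\cup f^{-1}(R)$, or iterate as in the paper, to get the contradiction. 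I recommend restructuring the proof around the two quantitative lemmas above and the Borel--Cantelli argument rather than around an increasing union.
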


\begin{proof}
  Fix a constant of expansivity $c>1$ for $\cR$.

  We construct a sequence $(f_n)_{n \in \N}$ of Borel partial injections from $X$ to $Y$ which are compatible with $\cR$.  Moreover, we will guarantee that the set $X' = \{x \in X \suchthat \exists m \in \N \ \forall n \geq m\ x \in \domain(f_n) \mathand f_n(x) = f_m(x)\}$ is $\mu$-conull, establishing that the limiting function satisfies the conclusion of the lemma.

  Given a Borel partial injection $g \from X \partialto Y$ we say that a sequence $(x_0, y_0, \ldots, x_n, y_n) \in X \times Y \times \cdots \times X \times Y$ is a \emph{$g$-augmenting path} if
  \begin{itemize}
    \item
    $x_0 \in X$ is not in the domain of $g$,

    \item
    for all distinct $i,j < n$, $y_i \neq y_j$,

    \item
    for all $i < n$, $(x_i, y_i) \in \cR$,

    \item
    for all $i < n$, $y_i = g(x_{i+1})$,

    \item
    $y_n \in Y$ is not in the image of $g$.
  \end{itemize}
  We call $n$ the \emph{length} of such a $g$-augmenting path and $x_0$ the \emph{origin} of the path.  Note that the sequence $(x_0, y_0, y_1, \ldots, y_n)$ in fact determines the entire $g$-augmenting path, and moreover that $x_i \neq x_j$ for distinct $i,j < n$.
	
In order to proceed we require a few lemmas.

  \begin{lemma}\label{lemma:augdomain}
    Suppose that $n \in \N$ and $g \from X \partialto Y$ is a Borel partial injection compatible with $\cR$ admitting no augmenting paths of length less than $n$.  Then $\mu(X \setminus \domain(g)) \leq c^{-n}$.
  \end{lemma}
  \begin{proof}
    Put $A_0 = X \setminus \domain(g)$.  Define recursively for $i < n$ sets $B_i = \cR_{A_i}$ and $A_{i+1} = A_i \cup g^{-1}(B_i)$.  Note that the assumption that there are no augmenting paths of length less than $n$ implies that each $B_i$ is contained in the image of $g$.  Expansivity of $\cR$ yields $\nu(B_i) \geq c\mu(A_i)$ and $(\mu,\nu)$-preservation of $\cR$ then implies that $\mu(A_{i+1}) \geq \nu(B_i) \geq c\mu(A_i)$.  Consequently, $1 \geq \mu(A_n) \geq c^n \mu(A_0)$, and hence $\mu(A_0) \leq c^{-n}$.
  \end{proof}

  We say that a graph $\cG$ on a standard Borel space $X$ has a \emph{Borel $\N$-coloring} if there is a Borel function $c \from X \to \N$ such that if $x$ and $y$ are $\cG$-adjacent then $c(x) \neq c(y)$.

  \begin{lemma}[Kechris--Solecki--Todorcevic {\cite[Proposition 4.5]{KecSolTod99}}]\label{lemma:locfinchrom}
    Every locally finite Borel graph on a standard Borel space has a Borel $\N$-coloring.
  \end{lemma}

  \begin{proof}
    Fix a countable algebra $\{B_n \suchthat n \in \N\}$ of Borel sets which separates points (for example, the algebra generated by the basic open sets of a compatible Polish topology), and color each vertex $x$ by the least $n \in \N$ such that $B_n$ contains $x$ and none of its neighbors.
  \end{proof}

  Analogously, for $k \in \N$, we say that a graph on a standard Borel $X$ has a \emph{Borel $k$-coloring} if there is a Borel function $c \from X \to \{1,\ldots, k\}$ giving adjacent points distinct colors.

  \begin{lemma}[Kechris--Solecki--Todorcevic {\cite[Proposition 4.6]{KecSolTod99}}]\label{lemma:bdddegchrom}
    If a Borel graph on a standard Borel $X$ has degree bounded by $d \in \N$, then it has a Borel $(d+1)$-coloring.
  \end{lemma}

  \begin{proof}
    By Lemma \ref{lemma:locfinchrom}, the graph has a Borel $\N$-coloring $c \from X \to \N$.  We recursively build sets $A_n$ for $n \in \N$ by $A_0 = \{x \in X \suchthat c(x) = 0\}$ and $A_{n+1} = A_n \cup \{x \in X \suchthat c(x) = n+1 \mbox{ and no neighbor of $x$ is in $A_n$}\}$.  Then $A = \bigcup_n A_n$ is a Borel set which is $\cG$-independent, and moreover is maximal with this property.  So the restriction of $\cG$ to $X \setminus A$ has degree less than $d$, and the result follows by induction.
  \end{proof}

  \begin{lemma}[ess.~Elek--Lippner {\cite[Proposition 1.1]{EleLip10}}]\label{lemma:nosmallaug}
    Suppose that $g \from X \partialto Y$ is a Borel partial injection compatible with $\cR$, and let $n \geq 1$.  Then there is a Borel partial injection $g' \from X \partialto Y$ compatible with $\cR$ such that
    \begin{itemize}
      \item
      $\domain(g') \supseteq \domain(g)$,

      \item
      $g'$ admits no augmenting paths of length less than $n$,

      \item
      $\mu(\{x \in X \suchthat g'(x) \neq g(x)\} \leq n\mu(X \setminus \domain(g))$.
    \end{itemize}
  \end{lemma}
  \begin{proof}
    Consider the set $Z$ of injective sequences $(x_0,y_0,x_1,y_1,\ldots,x_m,y_m)$, where $m<n$, such that for all $i \leq m$ we have $(x_i, y_i) \in \cR$ and for all $i<m$ we have $(x_{i+1},y_i) \in \cR$.  Equip $Z$ with the standard Borel structure it inherits as a Borel subset of $(X \times Y)^{\leq n}$.  Consider also the locally finite Borel graph $\cG$ on $Z$ rendering adjacent two distinct sequences in $Z$ if they share any entries.  By Lemma \ref{lemma:locfinchrom} there is a partition $Z = \bigsqcup_{k \in \N} Z_k$ of $Z$ into Borel sets such that for all $k$, no two elements of $Z_k$ are $\cG$-adjacent.  In other words, we partition potential augmenting paths into countably many colors, where no two paths of the same color intersect.  Thus we may flip paths of the same color simultaneously without risk of causing conflicts.  Towards that end, fix a bookkeeping function $s \from \N \to \N$ such that $s^{-1}(k)$ is infinite for all $k \in \N$ in order to consider each color class infinitely often.

    Given a $g$-augmenting path $z = (x_0, y_0, \ldots, x_m, y_m)$, define the \emph{flip} along $z$ to be the Borel partial function $g_z \from X \partialto Y$ given by
    $$
    g_z(x) = \begin{cases}
      y_i & \mbox{ if } \exists i \leq m\ x = x_i,\\
      g(x) & \mbox{ otherwise}.
    \end{cases}
    $$
    The fact that $z$ is $g$-augmenting ensures that $g_z$ is injective.  More generally, for any Borel $\cG$-independent set $Z^\aug \subseteq Z$ of $g$-augmenting paths, we may simultaneously flip $g$ along all paths in $Z^\aug$ to obtain another Borel partial injection $(g)_{Z^\aug}$.

    We iterate this construction.  Put $g_0 = g$.  Recursively assuming that $g_k \from X \partialto Y$ has been defined, let $Z^\aug_k$ be the set of $g_k$-augmenting paths in $Z_{s(k)}$, and let $g_{k+1} = (g_k)_{Z^\aug_k}$ be the result of flipping $g_k$ along all paths in $Z^\aug_k$.  As each $x \in X$ is contained in only finitely many elements of $Z$, and since each path in $Z$ can be flipped at most once (after the first flip its origin is always in the domain of the subsequent partial injections), it follows that the sequence $(g_k(x))_{k \in \N}$ is eventually constant.  Defining $g'(x)$ to be the limiting value, it is routine to check that there are no $g'$-augmenting paths of length less than $n$.

    Finally, to verify the third item of the lemma, put $A = \{x \in X \suchthat g'(x) \neq g(x)\}$.  With each $x \in A$ associate the origin of the first augmenting path along which it was flipped.  This is an at most $n$-to-$1$ Borel function from $A$ to $X \setminus \domain(g)$, and since $\cR$ is $(\mu,\nu)$-preserving the bound follows.
  \end{proof}

  We are now in position to follow the strategy outlined at the beginning of the proof.  Let $f_0 \from X \partialto Y$ be the empty function.  Recursively assuming the Borel partial injection $f_n \from X \partialto Y$ has been defined to have no augmenting paths of length less than $n$, let $f_{n+1}$ be the Borel partial injection $(f_n)'$ granted by applying Lemma \ref{lemma:nosmallaug} to $f_n$.  Thus $f_{n+1}$ has no augmenting paths of length less than $n+1$ and the recursive construction continues.

  Lemma \ref{lemma:augdomain} ensures that $\mu(X \setminus \domain(f_n)) \leq c^{-n}$, and thus the third item of Lemma \ref{lemma:nosmallaug} ensures that $\mu(\{x \in X \suchthat f_{n+1}(x) \neq f_n(x)\}) \leq (n+1)c^{-n}$.  As the sequence $(n+1)c^{-n}$ is summable, the Borel--Cantelli lemma implies that $X' = \{x \in X \suchthat \exists m \in \N \ \forall n \geq m\ x \in \domain(f_n) \mathand f_n(x) = f_m(x)\}$ is $\mu$-conull.  Finally, $f = \lim_{n \to \infty} f_n \restriction X'$ is as desired.
\end{proof}

\begin{lemma} \label{lem:expansive}
Suppose $X$ and $Y$ are standard Borel spaces, that $\mu$ is a Borel measure on $X$, and that $\nu$ is a Borel measure on $Y$. Suppose $\cR \subseteq X\times Y$ is Borel, locally finite, $(\mu ,\nu )$-preserving graph. Assume that there exist numbers $a,b >0$ such that $|\cR _x|\geq a$ for $\mu$-a.e.\ $x\in X$ and $|\cR ^y|\leq b$ for $\nu$-a.e.\ $y\in Y$. Then $\nu (\cR _A )\geq \frac{a}{b}\mu (A)$ for all Borel subsets $A\subseteq X$.
\end{lemma}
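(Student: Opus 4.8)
The proof will be a measure-theoretic double-counting (edge-counting) argument. The first step is to decompose the bipartite graph $\cR$ into countably many Borel partial matchings: applying Lemma~\ref{lemma:locfinchrom} to the line graph of $\cR$ (whose vertex set is $\cR$, with two edges adjacent when they share an endpoint in $X$ or in $Y$, and which is again a locally finite Borel graph) we obtain a partition $\cR = \bigsqcup_{n \in \N} \graph(f_n)$ in which each $f_n \from X \partialto Y$ is a Borel partial injection. The domains $\domain(f_n)$ and the image sets $f_n(A \cap \domain(f_n))$ for Borel $A$ are Borel by the Luzin--Novikov theorem (projections of Borel sets with singleton sections), and the cardinality functions are then Borel and $\N$-valued, via $|\cR_x| = \sum_n \mathbf{1}_{\domain(f_n)}(x)$ and $|\cR^y| = \sum_n \mathbf{1}_{f_n(\domain(f_n))}(y)$, using local finiteness.

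Now fix a Borel set $A \subseteq X$. Note first that $\cR_A$ is Borel, being the projection to $Y$ of the Borel set $\cR \cap (A \times Y)$, each of whose horizontal sections is contained in the corresponding finite set $\cR^y$. The heart of the argument is the chain
\[
\int_A |\cR_x|\, d\mu(x) \;=\; \sum_{n} \mu\bigl(A \cap \domain(f_n)\bigr) \;=\; \sum_{n} \nu\bigl(f_n(A \cap \domain(f_n))\bigr) \;=\; \int_Y |\cR^y \cap A|\, d\nu(y),
\]
where the outer equalities are Tonelli applied to the non-negative Borel sums above (noting $\sum_n \mathbf{1}_{f_n(A \cap \domain(f_n))}(y) = |\cR^y \cap A|$ because distinct $x \in \cR^y \cap A$ lie in distinct matchings $\graph(f_n)$), and the middle equality is the $(\mu,\nu)$-preservation hypothesis applied to each Borel bijection $f_n \restriction (A \cap \domain(f_n))$, whose graph lies in $\cR$. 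Since $\cR^y \cap A = \emptyset$ for $y \notin \cR_A$, the last integral equals $\int_{\cR_A} |\cR^y \cap A|\, d\nu$, which is at most $\int_{\cR_A} |\cR^y|\, d\nu \le b\,\nu(\cR_A)$: the $\nu$-null set where $|\cR^y| \le b$ may fail contributes nothing since $|\cR^y|$ is finite everywhere. On the other hand, the $\mu$-a.e.\ bound $|\cR_x| \ge a$ gives $a\,\mu(A) \le \int_A |\cR_x|\, d\mu$. Combining, $a\,\mu(A) \le b\,\nu(\cR_A)$, i.e.\ $\nu(\cR_A) \ge \tfrac{a}{b}\mu(A)$.

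Since every quantity above is non-negative (with $+\infty$ allowed), no finiteness or $\sigma$-finiteness of $\mu$ or $\nu$ is required and the inequality holds in the extended reals. I do not anticipate a genuine obstacle here: the only points needing care are the Borel-measurability bookkeeping for the edge-decomposition and the Luzin--Novikov projections, and the fact that $(\mu,\nu)$-preservation must be invoked only for honest Borel bijections with graph contained in $\cR$ — which is precisely why the decomposition into Borel partial \emph{injections} (rather than mere partial functions) is the right first move.
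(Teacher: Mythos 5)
Your argument is correct and is essentially the paper's proof: the same double-counting chain $a\mu(A) \le \int_A |\cR_x|\,d\mu = \int_{\cR_A} |\cR^y \cap A|\,d\nu \le b\,\nu(\cR_A)$, with the paper simply asserting the middle identity as a consequence of $(\mu,\nu)$-preservation. Your decomposition of $\cR$ into countably many Borel partial injections via Lemma~\ref{lemma:locfinchrom} and Luzin--Novikov just supplies the standard justification of that identity, which the paper leaves implicit.
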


\begin{proof}
Since $\cR$ is $(\mu ,\nu )$-preserving we have $\int _{A}|\cR _x | \, d\mu = \int _{\cR _A} |\cR ^y \cap A| \, d\nu$. Hence
\[
a\mu (A) =\int _{A} a \, d\mu \leq \int _{A}|\cR _x | \, d\mu = \int _{\cR _A} |\cR ^y \cap A| \, d\nu \leq \int _{\cR _A}b \, d\nu = b \nu (\cR _A)  .\qedhere
\]
\end{proof}

\section{F{\o}lner tilings}\label{S-tilings}

Fix a countable group $G$. For finite sets $K, F \subseteq G$ and $\delta > 0$, we say that $F$ is \emph{$(K, \delta)$-invariant} if $|K F \symd F| < \delta |F|$. Note this condition implies $|K F| < (1 + \delta) |F|$. Recall that $G$ is \emph{amenable} if for every finite $K \subseteq G$ and $\delta > 0$ there exists a $(K, \delta)$-invariant set $F$. A \emph{F{\o}lner sequence} is a sequence of finite sets $F_n \subseteq G$ with the property that for every finite $K \subseteq G$ and $\delta > 0$ the set $F_n$ is $(K, \delta)$-invariant for all but finitely many $n$. Below, we always assume that $G$ is amenable.

Fix a free action $G \acts X$. For $A \subseteq X$ we define the lower and upper \emph{Banach densities} of $A$ to be
$$\lBD(A) = \sup_{\substack{F \subseteq G\\F \text{ finite}}} \inf_{x \in X} \frac{|A \cap F x|}{|F|} \qquad \text{and} \qquad \uBD(A) = \inf_{\substack{F \subseteq G\\F \text{ finite}}} \sup_{x \in X} \frac{|A \cap F x|}{|F|}.$$
Equivalently \cite[Lemma 2.9]{DowHucZha16}, if $(F_n)_{n \in \N}$ is a F{\o}lner sequence then
$$\lBD(A) = \lim_{n \rightarrow \infty} \inf_{x \in X} \frac{|A \cap F_n x|}{|F_n|} \qquad \text{and} \qquad \uBD(A) = \lim_{n \rightarrow \infty} \sup_{x \in X} \frac{|A \cap F_n x|}{|F_n|}.$$

We now define an analogue of `$(K, \delta)$-invariant' for infinite subsets of $X$. A set $A \subseteq X$ (possibly infinite) is \emph{$(K, \delta)^*$-invariant} if there is a finite set $F \subseteq G$ such that $|(K A \symd A) \cap F x| < \delta |A \cap F x|$ for all $x \in X$. Equivalently, $A$ is $(K, \delta)^*$-invariant if and only if for every F{\o}lner sequence $(F_n)_{n \in \N}$ we have $\lim_n \sup_x |(K A \symd A) \cap F_n x| / |A \cap F_n x| < \delta$.

A collection $\{F_i : i\in I \}$ of finite subsets of $X$ is called \emph{$\epsilon$-disjoint} if for each $i$ there is an $F_i' \subseteq F_i$ such that $|F_i'| > (1 - \epsilon)|F_i|$ and such that the sets $\{F_i' : i\in I\}$ are pairwise disjoint.

\begin{lemma} \label{lem:starinv}
Let $K, W \subseteq G$ be finite, let $\epsilon, \delta > 0$, let $C \subseteq X$, and for $c \in C$ let $F_c \subseteq W$ be $(K, \delta (1 - \epsilon))$-invariant. If the collection $\{F_c c : c \in C\}$ is $\epsilon$-disjoint and $\bigcup_{c \in C} F_c c$ has positive lower Banach density, then $\bigcup_{c \in C} F_c c$ is $(K, \delta)^*$-invariant.
\end{lemma}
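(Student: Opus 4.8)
The plan is to fix a F\o lner sequence $(F_n)_{n\in\N}$ and estimate, for each $n$ and each $x\in X$, the ratio $|(KA\symd A)\cap F_n x|/|A\cap F_n x|$, where $A=\bigcup_{c\in C}F_c c$. The key point is that because each $F_c$ sits inside the fixed finite set $W$ and is $(K,\delta(1-\epsilon))$-invariant, the ``error'' $KF_c c\symd F_c c$ occupies a controlled fraction of $F_c c$; and because the translates $\{F_c c:c\in C\}$ are $\epsilon$-disjoint, these local errors do not overlap too badly, so they can be summed. First I would choose, for each $c\in C$, a subset $F_c'\subseteq F_c$ witnessing $\epsilon$-disjointness, so the sets $\{F_c'c:c\in C\}$ are pairwise disjoint and $|F_c'|>(1-\epsilon)|F_c|$; this will be the bookkeeping device that turns ``most of each tile'' into an honestly disjoint family whose cardinalities add up.

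Next I would do the counting inside a window $F_nx$. For the denominator, since $KA\supseteq A$ we have $A\cap F_n x$ containing all the points of the disjoint pieces $F_c'c$ that land in $F_nx$; because $W$ is finite and $(F_n)$ is F\o lner, almost every translate $F_cc$ that meets $F_nx$ is in fact contained in a slightly enlarged window (a boundary effect of size $o(|F_n|)$ uniformly in $x$), so $|A\cap F_nx|\ge (1-\epsilon)\sum_{c}|F_c|$ where the sum is over those $c$ with $F_cc\subseteq F_nx$, up to a $o(|F_n|)$ error. For the numerator, $(KA\symd A)\cap F_nx\subseteq\bigcup_c(KF_cc\symd F_cc)\cap F_nx$ modulo the same boundary term, and $|KF_cc\symd F_cc|=|KF_c\symd F_c|<\delta(1-\epsilon)|F_c|$, so $|(KA\symd A)\cap F_nx|\le \delta(1-\epsilon)\sum_c|F_c|+o(|F_n|)$, the sum again over the $c$ with $F_cc$ inside the enlarged window. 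Dividing, the $\sum_c|F_c|$ factors cancel, the $(1-\epsilon)$ from the numerator cancels the $(1-\epsilon)$ from the $\epsilon$-disjointness in the denominator, and the $o(|F_n|)$ terms wash out provided $\sum_c|F_c|$ (over tiles meeting the window) is comparable to $|F_n|$ — which is exactly where positivity of the lower Banach density of $A$ is used, since $\lBD(A)>0$ forces $|A\cap F_nx|\ge \alpha|F_n|$ for some $\alpha>0$ and all large $n$, all $x$. This yields $\limsup_n\sup_x|(KA\symd A)\cap F_nx|/|A\cap F_nx|\le\delta$, and a small amount of slack (the strict inequality in $(K,\delta(1-\epsilon))$-invariance, or replacing $\delta$ by $\delta'<\delta$ and then comparing) upgrades this to the strict inequality $<\delta$ required by the definition of $(K,\delta)^*$-invariance.

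The main obstacle I anticipate is the uniformity over $x\in X$ in the boundary estimates: I need that the number of tiles $F_cc$ which meet $F_nx$ but are not contained in a mildly enlarged window $\widetilde{F}_n x$ contributes only a $o(|F_n|)$ fraction of the points, and this must hold simultaneously for every $x$. This is handled by noting $F_cc\subseteq W c$, so a tile meeting $F_nx$ has its ``anchor'' $c$ in $(W^{-1}F_n)x$ and the tile itself inside $(WW^{-1}F_n)x$; the points of such a tile that lie outside $F_nx$ all lie in $(WW^{-1}F_n\setminus F_n)x$-ish boundary strips, whose size relative to $|F_n|$ tends to $0$ by the F\o lner property applied to the fixed finite set $WW^{-1}$, uniformly in $x$. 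Once that uniform boundary control is in hand, the rest is the elementary double-counting sketched above; I would also want to be slightly careful that the $\epsilon$-disjoint witnesses $F_c'$ are used only in the lower bound for the denominator and that no measurability or Borel issues intrude, since the statement is purely combinatorial about the fixed action $G\acts X$.
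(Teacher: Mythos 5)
Your proposal is correct and follows essentially the same route as the paper's proof: the paper simply fixes one sufficiently invariant window $U$ (instead of passing to a F{\o}lner sequence and a limsup), controls the boundary contribution via the set of $b\in Ux$ with $Tb\not\subseteq Ux$ for $T=WW^{-1}(\{1_G\}\cup K)^{-1}$, and runs the same $\epsilon$-disjointness double counting, with the slack you invoke made precise by the observation that $F_c\subseteq W$ with $W$ finite leaves only finitely many possible shapes, hence a uniform $\delta_0<\delta$ such that every $F_c$ is $(K,\delta_0(1-\epsilon))$-invariant. The only cosmetic corrections: your boundary-enlargement set should also absorb $K$ (as in the paper's $T$), and the aside ``$KA\supseteq A$'' is unnecessary for the denominator bound.
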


\begin{proof}
Set $A = \bigcup_{c \in C} F_c c$ and set $T = W W^{-1} (\{1_G\} \cup K)^{-1}$. Since $W$ is finite and each $F_c \subseteq W$, there is $0 < \delta_0 < \delta$ such that each $F_c$ is $(K, \delta_0 (1 - \epsilon))$-invariant. Fix a finite set $U \subseteq G$ which is $(T, \frac{\lBD(A)}{2 |T|} (\delta - \delta_0))$-invariant and satisfies $\inf_{x \in X} |A \cap U x| > \frac{\lBD(A)}{2} |U|$. Now fix $x \in X$. Let $B$ be the set of $b \in U x$ such that $T b \not\subseteq U x$. Note that $B \subseteq T^{-1} (T U x \symd U x)$ and thus
$$|B| \leq |T| \cdot \frac{|T U \symd U|}{|U|} \cdot \frac{|U|}{|A \cap U x|} \cdot |A \cap U x| < (\delta - \delta_0) |A \cap U x|.$$
Set $C' = \{c \in C : F_c c \subseteq U x\}$. Note that the $\epsilon$-disjoint assumption gives $(1 - \epsilon) \sum_{c \in C'} |F_c| \leq |A \cap U x|$. Also, our definitions of $C'$, $T$, and $B$ imply that if $c \in C \setminus C'$ and $(\{1_G\} \cup K) F_c c \cap U x \neq \emptyset$ then $((\{1_G\} \cup K) F_c c) \cap U x \subseteq B$. Therefore $(K A \symd A) \cap U x \subseteq B \cup \bigcup_{c \in C'} (K F_c c \symd F_c c)$. Combining this with the fact that each set $F_c$ is $(K, \delta_0 (1 - \epsilon))$-invariant, we obtain
\begin{align*}
|(K A \symd A) \cap U x| & \leq |B| + \sum_{c \in C'} |K F_c c \symd F_c c|\\
 & < (\delta - \delta_0) |A \cap U x| + \sum_{c \in C'} \delta_0 (1 - \epsilon) |F_c|\\
 & \leq (\delta - \delta_0) |A \cap U x| + \delta_0 |A \cap U x|\\
 & = \delta |A \cap U x|.
\end{align*}
Since $x$ was arbitrary, we conclude that $A$ is $(K, \delta)^*$-invariant.
\end{proof}

\begin{lemma} \label{lem:pack}
Let $T \subseteq G$ be finite and let $\epsilon, \delta > 0$ with $\epsilon (1 + \delta) < 1$. Suppose that $A \subseteq X$ is $(T^{-1}, \delta)^*$-invariant. If $B \supseteq A$ and $|B \cap T x| \geq \epsilon |T|$ for all $x \in X$, then
$$\lBD(B) \geq (1 - \epsilon (1 + \delta)) \cdot \lBD(A) + \epsilon.$$
\end{lemma}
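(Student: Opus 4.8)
The plan is to estimate $|B\cap Fx|$ from below for a suitable finite set $F$ by separating the contribution coming from the translated copies of $A$ (which are nearly disjoint inside windows and fill roughly an $\lBD(A)$-fraction) from the extra mass guaranteed by the hypothesis $|B\cap Tx|\geq\epsilon|T|$. First I would fix a F{\o}lner sequence $(F_n)$ and, using the $(T^{-1},\delta)^*$-invariance of $A$, a witnessing finite set $F_0\subseteq G$ so that $|(T^{-1}A\symd A)\cap F_n x|<\delta|A\cap F_n x|$ for all $x$ once $n$ is large; by passing to a subsequence I may also assume $|A\cap F_n x|/|F_n|\to\lBD(A)$ uniformly in $x$. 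The point of the $T^{-1}$-star-invariance is that it lets me tile most of the window $F_n x$ by disjoint translates $Ty$ with $y$ chosen so that $Ty$ meets $A$ in a controlled way: away from the $A$-boundary, points of $A\cap F_n x$ can be organized into disjoint blocks of the form $Ty$, and on each such block $B$ contributes at least $\epsilon|T|$ points while $A$ contributes at most $|T|$ points, a gain of at least $\epsilon|T| - |A\cap Ty|$ relative to just counting $A$.

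The cleanest way to organize this is a greedy packing: inside $F_n x$, repeatedly pick $y$ with $Ty\subseteq F_n x$ and $Ty$ disjoint from the blocks chosen so far, until no such $y$ remains; call the chosen centers $Y_n(x)$. Maximality forces $\bigcup_{y\in Y_n(x)} T^{-1}Ty$ to cover all of $F_n x$ except a boundary layer of size $o(|F_n|)$, hence $|Y_n(x)|\cdot|T|\geq (1-o(1))|F_n|/|T^{-1}T|$ — but that denominator is wasteful. A better bookkeeping: I would instead only insist that the blocks $Ty$ be disjoint and contained in $F_n x$, and estimate $\sum_{y\in Y_n(x)}|A\cap Ty|$ against $|A\cap F_n x|$ using that the $Ty$ are disjoint, while bounding the part of $A\cap F_n x$ not covered by any block. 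The $(T^{-1},\delta)^*$-invariance enters precisely here: if $a\in A\cap F_n x$ is not within $T^{-1}$ of the boundary of $F_n x$ and is not covered, I can still find an admissible block through it unless it is blocked by previously chosen blocks, and a careful accounting shows the uncovered portion of $A$ has size at most $\delta|A\cap F_n x| + o(|F_n|)$, so the disjoint blocks capture at least $(1-\delta-o(1))|A\cap F_n x|$ points of $A$. Since each block carries $\geq\epsilon|T|$ points of $B$ and $\leq|T|$ points of $A$, and the blocks are disjoint subsets of $F_n x$, while the remaining $A$-points outside the blocks also lie in $B$,
\[
|B\cap F_n x| \;\geq\; \sum_{y\in Y_n(x)}\epsilon|T| \;+\; \bigl(|A\cap F_n x| - \textstyle\sum_{y}|A\cap Ty|\bigr).
\]
Dividing by $|F_n|$, using $\sum_y|A\cap Ty|\leq |A\cap F_n x|$ and $\sum_y|T|=|Y_n(x)|\,|T|\geq (1-\delta-o(1))|A\cap F_n x|/1$ — more precisely $\sum_y|T|\geq\sum_y|A\cap Ty|\geq(1-\delta-o(1))|A\cap F_n x|$ — gives $|B\cap F_n x|/|F_n| \geq \epsilon(1-\delta-o(1))\,\lBD(A) + (1-o(1))\lBD(A) - \lBD(A)\cdot(\text{stuff})$; regrouping the algebra yields the target $(1-\epsilon(1+\delta))\lBD(A)+\epsilon$ in the limit, where the hypothesis $\epsilon(1+\delta)<1$ is what keeps the coefficient of $\lBD(A)$ nonnegative and keeps the bound honest.

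The main obstacle I anticipate is the greedy-packing accounting: making rigorous the claim that disjoint admissible blocks $Ty\subseteq F_n x$ can be chosen so as to cover all but a $(\delta+o(1))$-fraction of $A\cap F_n x$. This is exactly where $(T^{-1},\delta)^*$-invariance must be used rather than mere F{\o}lner-type control — a point $a\in A$ deep inside $F_n x$ fails to be coverable only if every $y$ with $a\in Ty$ has $Ty$ already hitting a chosen block, which (tracing the combinatorics) forces $a$ to be "close" to $T^{-1}A\symd A$ or to already-covered $A$-mass; bounding the total number of such obstructed points by $\delta|A\cap F_n x|$ plus a boundary term is the crux and needs the star-invariance witness $F_0$ absorbed into $F_n$ for large $n$. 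Everything else — the uniform convergence along the F{\o}lner sequence, the passage $|B\cap Tx|\geq\epsilon|T|$ to a per-block lower bound, and the final regrouping into $(1-\epsilon(1+\delta))\lBD(A)+\epsilon$ — is routine once that packing lemma is in hand.
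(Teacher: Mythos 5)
There is a genuine gap, and it is visible in your own displayed estimate: every term on the right-hand side of
\[
|B\cap F_n x|\;\geq\;\epsilon|T|\,|Y_n(x)|\;+\;\bigl(|A\cap F_n x|-\textstyle\sum_{y}|A\cap Ty|\bigr)
\]
is controlled by a multiple of $|A\cap F_n x|$, because your lower bound on the number of blocks, $|T|\,|Y_n(x)|\geq\sum_y|A\cap Ty|\geq(1-\delta-o(1))|A\cap F_nx|$, ties the harvested $\epsilon$-mass to the $A$-mass captured. No regrouping of such terms can produce the additive constant $\epsilon$: if $\lBD(A)$ is small your bound degenerates to (a multiple of) $\lBD(A)$, whereas the lemma must give at least $\epsilon$ — indeed it is applied in the proof of Lemma \ref{lem:bdense} with $A=\emptyset$ precisely to bootstrap the density from $0$ up to $\epsilon$, and there your argument yields $0$. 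The additive $\epsilon$ has to be harvested on a region whose size is proportional to the part of the window \emph{away from} $T^{-1}A$ (proportion $\geq 1-(1+\delta)\lBD(A)$, which is where $(T^{-1},\delta)^*$-invariance actually enters), not proportional to $|A|$. Your fallback of packing disjoint translates $Ty$ to fill most of $F_nx$ (so that $|T||Y_n(x)|\approx|F_n|$) does not work either: translates of a fixed finite set $T$ need not tile, and a maximal disjoint packing only guarantees a $|T|/|T^{-1}T|$ fraction of the window, which you yourself noted is too lossy. The secondary claim — that disjoint blocks can be chosen to capture a $(1-\delta)$-fraction of $A\cap F_nx$ — is also not something the star-invariance gives you, but even granting it the count above still fails.

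The paper's proof avoids disjoint packing altogether and uses the invariance in the opposite direction. Pick a window $U$ that is $(T,\theta)$-invariant, has $\inf_x|A\cap Ux|/|U|>\lBD(A)-\theta$, and witnesses the $(T^{-1},\delta)^*$-invariance. Let $U'=\{u\in U: A\cap Tux=\emptyset\}$; since $U\setminus U'$ corresponds to $T^{-1}A\cap Ux$, star-invariance gives $|U'|/|U|>1-(1+\delta)\alpha$ with $\alpha=|A\cap Ux|/|U|$. For each $u\in U'$ the block $Tux$ misses $A$, so it contains at least $\epsilon|T|$ points of $B\setminus A$; these blocks overlap, but a double count over the $\epsilon|T||U'|$ pairs $(t,u)$ with $tux\in B\setminus A$ produces a single $t^*\in T$ with $|(B\setminus A)\cap t^*U'x|\geq\epsilon|U'|$. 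This set is disjoint from $A\cap Ux$, both lie in $TUx$, and the $(T,\theta)$-invariance of $U$ converts the count into a density bound, giving $(1-\epsilon(1+\delta))\alpha+\epsilon$ up to $\theta$-errors. In short, the key idea your proposal is missing is the averaging/pigeonhole over $t\in T$, which substitutes for the impossible disjoint tiling and is what generates the standalone $+\epsilon$.
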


\begin{proof}
This is implicitly demonstrated in \cite[Proof of Lemma 4.1]{DowHucZha16}. As a convenience to the reader, we include a proof here. Fix $\theta > 0$. Since $A$ is $(T^{-1}, \delta)^*$-invariant, we can pick a finite set $U \subseteq G$ which is $(T, \theta)$-invariant and satisfies
$$\inf_{x \in X} \frac{|A \cap U x|}{|U|} > \lBD(A) - \theta \qquad \text{and} \qquad \sup_{x \in X} \frac{|T^{-1} A \cap U x|}{|A \cap U x|} < 1 + \delta.$$
Fix $x \in X$, set $\alpha = \frac{|A \cap U x|}{|U|} > \lBD(A) - \theta$, and set $U' = \{u \in U : A \cap T u x = \emptyset\}$. Notice that
$$\frac{|U'|}{|U|} = \frac{|U| - |T^{-1} A \cap U x|}{|U|} = 1 - \frac{|T^{-1} A \cap U x|}{|A \cap U x|} \cdot \frac{|A \cap U x|}{|U|} > 1 - (1 + \delta) \alpha.$$
Since $A \cap T U' x = \emptyset$ and $|B \cap T y| \geq \epsilon |T|$ for all $y \in X$, it follows that $|(B \setminus A) \cap T u x| \geq \epsilon |T|$ for all $u \in U'$. Thus there are $\epsilon |T| |U'|$ many pairs $(t, u) \in T \times U'$ with $t u x \in B \setminus A$. It follows there is $t^* \in T$ with $|(B \setminus A) \cap t^* U' x| \geq \epsilon \cdot |U'|$. Therefore
\begin{align*}
\frac{|B \cap T U x|}{|T U|} & \geq \left( \frac{|A \cap U x|}{|U|} + \frac{|(B \setminus A) \cap t^* U' x|}{|U'|} \cdot \frac{|U'|}{|U|} \right) \cdot \frac{|U|}{|T U|}\\
 & > \Big( \alpha + \epsilon (1 - (1 + \delta) \alpha) \Big) \cdot (1 + \theta)^{-1}\\
 & = \Big((1 - \epsilon (1 + \delta)) \alpha + \epsilon \Big) \cdot (1 + \theta)^{-1}\\
 & > \Big((1 - \epsilon (1 + \delta)) (\lBD(A) - \theta) + \epsilon \Big) \cdot (1 + \theta)^{-1}.
\end{align*}
Letting $\theta$ tend to $0$ completes the proof.
\end{proof}

\begin{lemma} \label{lem:bepack}
Let $X$ be a standard Borel space and let $G \acts X$ be a free Borel action. Let $Y \subseteq X$ be Borel, let $T \subseteq G$ be finite, and let $\epsilon \in (0, 1 / 2)$. Then there is a Borel set $C \subseteq X$ and a Borel function $c \in C \mapsto T_c \subseteq T$ such that $|T_c| > (1 - \epsilon) |T|$, the sets $\{T_c c : c \in C\}$ are pairwise disjoint and disjoint with $Y$, $Y \cup \bigcup_{c \in C} T_c c = Y \cup T C$, and $|(Y \cup T C) \cap T x| \geq \epsilon |T|$ for all $x \in X$.
\end{lemma}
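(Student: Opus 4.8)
The plan is to run the standard Ornstein--Weiss greedy packing of translates of $T$ while avoiding $Y$, but to make it Borel by carrying out the greedy steps in parallel within the color classes of an auxiliary graph, so that the construction terminates after $\omega$ stages rather than through a transfinite maximality argument. To this end, let $\cG$ be the graph on $X$ joining two distinct points $c, c'$ exactly when $Tc \cap Tc' \neq \emptyset$; by freeness this happens if and only if $c' \in (T^{-1}T)c \setminus \{c\}$, so $\cG$ is a Borel graph of degree at most $|T^{-1}T| - 1$, in particular locally finite. Lemma~\ref{lemma:locfinchrom} then provides a Borel partition $X = \bigsqcup_{k \in \N} X_k$ into $\cG$-independent sets; the useful consequence is that whenever $c \neq c'$ lie in a common $X_k$, the translates $Tc$ and $Tc'$ are disjoint.

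Next I would define, by recursion on $k \in \N$, an increasing sequence of Borel \emph{occupied sets} $O_k \subseteq X$ together with Borel center sets $C_k \subseteq X_k$ and the assignment $c \mapsto T_c$. Put $O_0 = Y$. Given $O_k$, let
\[
C_k = \{ c \in X_k : |Tc \cap O_k| < \epsilon |T| \},
\]
and for $c \in C_k$ set $T_c = \{ t \in T : tc \notin O_k \}$, so that $|T_c| \geq |T| - |Tc \cap O_k| > (1 - \epsilon)|T|$; here, and throughout, freeness is used to identify $|Tc \cap O_k|$ with $|\{ t \in T : tc \in O_k \}|$ and $|T_c c|$ with $|T_c|$. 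Then set $O_{k+1} = O_k \cup \bigcup_{c \in C_k} T_c c$. Each object produced is Borel: the function $c \mapsto |Tc \cap O_k| = \sum_{t \in T} \mathbf{1}_{t^{-1} O_k}(c)$ is Borel since each translation of $X$ is a Borel automorphism, so $C_k$ is Borel; and $\bigcup_{c \in C_k} T_c c = \bigcup_{t \in T} t \cdot (C_k \setminus t^{-1} O_k)$ is Borel, so $O_{k+1}$ is Borel. Finally set $C = \bigsqcup_k C_k$, and observe $\bigcup_k O_k = Y \cup \bigcup_{c \in C} T_c c$.

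It then remains to verify the four assertions. The bound $|T_c| > (1 - \epsilon)|T|$ holds by construction. The tiles $\{ T_c c : c \in C_k \}$ are pairwise disjoint because $C_k \subseteq X_k$ is $\cG$-independent, and each is disjoint from $O_k$ by the definition of $T_c$; since $O_k$ contains $Y$ along with every tile placed at a stage before $k$, the full family $\{ T_c c : c \in C \}$ is pairwise disjoint and disjoint from $Y$. For the identity $Y \cup \bigcup_{c \in C} T_c c = Y \cup TC$, the inclusion $\subseteq$ is clear from $T_c \subseteq T$; conversely, for $c \in C_k$ and $t \in T$ either $t \in T_c$, so $tc \in T_c c$, or $t \notin T_c$, so $tc \in O_k \subseteq \bigcup_j O_j = Y \cup \bigcup_{c' \in C} T_{c'} c'$. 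Finally, fix $x \in X$ and let $k$ be its color. If $x \notin C_k$, then $|(Y \cup TC) \cap Tx| \geq |O_k \cap Tx| \geq \epsilon |T|$; if $x \in C_k$, then $T_x x \subseteq (Y \cup TC) \cap Tx$, whence $|(Y \cup TC) \cap Tx| \geq |T_x| > (1 - \epsilon)|T| \geq \epsilon |T|$, using $\epsilon < 1/2$.

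The only real obstacle is the one the first step is designed to remove: a naive greedy procedure---keep adding a point whose $T$-translate meets the occupied set in fewer than $\epsilon|T|$ points, until none remain---does not obviously admit a Borel set of centers, since ``until none remain'' is a transfinite stopping condition. Coloring $\cG$ and processing one color class at a time, which is legitimate precisely because same-colored centers have disjoint $T$-translates and therefore cannot compete for the same points, collapses the procedure to an $\omega$-indexed recursion of Borel operations; at its end every $x$ has either been crowded out (its $T$-translate already meets $O_k$ in at least $\epsilon|T|$ points at its own stage $k$) or been installed as a center, which is exactly what the last assertion records.
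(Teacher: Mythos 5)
Your proof is correct and follows essentially the same route as the paper: color the Borel graph in which two centers conflict when their $T$-translates meet, then greedily process the color classes, admitting a center exactly when its $T$-translate meets the current occupied set in fewer than $\epsilon|T|$ points. The only (immaterial) difference is that you invoke the countable coloring Lemma~\ref{lemma:locfinchrom}, giving an $\omega$-indexed recursion, whereas the paper uses the bounded-degree Lemma~\ref{lemma:bdddegchrom} to get a finite partition and a finite recursion.
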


\begin{proof}
Using Lemma \ref{lemma:bdddegchrom}, fix a Borel partition $\cP = \{P_1, \ldots, P_m\}$ of $X$ such that $T x \cap T x' = \emptyset$ for all $x \neq x' \in P_i$ and all $1 \leq i \leq m$. We will pick Borel sets $C_i \subseteq P_i$ and set $C = \bigcup_{1 \leq i \leq m} C_i$. Set $Y_0 = Y$. Let $1 \leq i \leq m$ and inductively assume that $Y_{i-1}$ has been defined. Define $C_i = \{c \in P_i : |Y_{i-1} \cap T c| < \epsilon |T|\}$, define $Y_i = Y_{i-1} \cup T C_i$, and for $c \in C_i$ set $T_c = \{t \in T : t c \not\in Y_{i-1}\}$. It is easily seen that $C = \bigcup_{1 \leq i \leq m} C_i$ has the desired properties.
\end{proof}

The following lemma is mainly due to Ornstein--Weiss \cite{OrnWei87}, who proved
it with an invariant probability measure taking the place of Banach density.
Ornstein and Weiss also established a purely group-theoretic counterpart
of this result which was later adapted to the Banach density setting by
Downarowicz--Huczek--Zhang in \cite{DowHucZha16}
and will be heavily used in Section~\ref{S-Z-stability},
where it is recorded as Theorem~\ref{T-qt}.
The only difference between this lemma and prior versions is that
we simultaneously work in the Borel setting and use Banach density.

\begin{lemma}
\cite[II.\S 2. Theorem 5]{OrnWei87} \cite[Lemma 4.1]{DowHucZha16} \label{lem:bdense}
Let $X$ be a standard Borel space and let $G \acts X$ be a free Borel action. Let $K \subseteq G$ be finite, let $\epsilon \in (0, 1 / 2)$, and let $n$ satisfy $(1 - \epsilon)^n < \epsilon$. Then there exist $(K, \epsilon)$-invariant sets $F_1, \ldots, F_n$, a Borel set $C \subseteq X$, and a Borel function $c \in C \mapsto F_c \subseteq G$ such that:
\begin{enumerate}
\item[\rm (i)] for every $c \in C$ there is $1 \leq i \leq n$ with $F_c \subseteq F_i$ and $|F_c| > (1 - \epsilon) |F_i|$;
\item[\rm (ii)] the sets $F_c c$, $c \in C$, are pairwise disjoint; and
\item[\rm (iii)] $\lBD(\bigcup_{c \in C} F_c c) > 1 - \epsilon$.
\end{enumerate}
\end{lemma}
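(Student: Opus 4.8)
The plan is to follow the classical Ornstein--Weiss exhaustion argument, but phrased simultaneously in the Borel category and in terms of lower Banach density, using Lemma~\ref{lem:bepack} to perform each packing step in a Borel way and Lemma~\ref{lem:pack} to track the density increments. First I would fix a single F{\o}lner set $F$ that is $(K,\epsilon)$-invariant; in fact, since we need $n$ shapes and each packing step will excise a thin boundary, I would choose $(K,\delta)$-invariant sets with $\delta$ slightly smaller than $\epsilon$ so that the slightly shrunken subsets $F_c \subseteq F_i$ remain $(K,\epsilon)$-invariant --- this is exactly the bookkeeping already seen in Lemma~\ref{lem:starinv}. Concretely I would pick $\delta>0$ with $(1+\delta)$ close enough to $1$, and also shrink $\epsilon$ to some $\epsilon'$ so that after $n$ steps the accumulated errors still land below the target thresholds; the inequality $(1-\epsilon)^n<\epsilon$ is what guarantees $n$ steps suffice.

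Next I would iterate. Set $Y_0 = \emptyset$ and $\lBD(Y_0)=0$. At stage $i$, having produced a Borel set $Y_{i-1}$ of the form $\bigcup_{c\in C_{i-1}} F_c c$ (a union of $\epsilon$-disjoint dynamical translates of the shapes chosen so far) which is $(K,\delta)^*$-invariant --- hence also $(T_i^{-1},\delta)^*$-invariant for the relevant $T_i$ --- I would apply Lemma~\ref{lem:bepack} with $T=F_i$ (an appropriately $(K,\delta)$-invariant F{\o}lner set to be chosen at this stage; the key point of the Ornstein--Weiss argument is that $F_i$ may be chosen \emph{after} seeing $Y_{i-1}$, so that its invariance constant can be taken far better than the crudeness accumulated so far) and $Y=Y_{i-1}$, obtaining a Borel set $C_i$, Borel shrinkings $F_c\subseteq F_i$ with $|F_c|>(1-\epsilon)|F_i|$, pairwise-disjoint translates $F_c c$ disjoint from $Y_{i-1}$, and the covering estimate $|(Y_{i-1}\cup F_i C_i)\cap F_i x|\ge \epsilon|F_i|$ for all $x$. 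Put $Y_i = Y_{i-1}\cup F_i C_i = \bigcup_{c\in C_1\cup\cdots\cup C_i} F_c c$. Now Lemma~\ref{lem:pack}, applied with $A=Y_{i-1}$, $B=Y_i$, $T=F_i$, gives $\lBD(Y_i)\ge (1-\epsilon(1+\delta))\lBD(Y_{i-1})+\epsilon$; unwinding this recursion over $i=1,\dots,n$ yields $\lBD(Y_n)\ge 1-(1-\epsilon(1+\delta))^n$, which by the choice of $n$ (and $\delta$ small) exceeds $1-\epsilon$, giving (iii). Meanwhile Lemma~\ref{lem:starinv} applied to the $\epsilon$-disjoint family $\{F_c c\}$ (whose union has positive lower Banach density once $i\ge 1$, say after the first step) certifies that each $Y_i$ is $(K,\delta)^*$-invariant, which is what is needed to re-enter the loop; and items (i) and (ii) are recorded directly from the output of Lemma~\ref{lem:bepack} at each stage, with $C=\bigcup_i C_i$ and the stored assignment $c\mapsto F_c$.

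The one genuine subtlety --- the heart of the Ornstein--Weiss idea --- is the order of quantifiers: the invariance parameters of the shapes $F_i$ cannot all be fixed in advance, because packing $Y_{i-1}$ with $F_i$-translates and then shrinking each to $F_c$ costs a boundary term of relative size $\sim\delta$, and if $\delta$ were comparable to $\epsilon$ this would destroy the approximate invariance. Instead, at stage $i$ one first fixes $Y_{i-1}$, notes it is $(F_i^{-1},\delta_i)^*$-invariant \emph{for whatever $\delta_i$ we like} provided $F_i$ is chosen invariant enough relative to the finite ``window'' witnessing the $*$-invariance of $Y_{i-1}$, and only then selects $F_i$. I would therefore interleave the choice of the $F_i$ with the construction rather than front-loading it, and carry along explicit constants: fix the target $\epsilon$, choose $\delta$ with $(1-\epsilon(1+\delta))^n<\epsilon$ (possible since $(1-\epsilon)^n<\epsilon$ is strict), and at each stage choose $F_i$ to be $(K,\delta)$-invariant \emph{and} invariant enough that the $*$-invariance bookkeeping of Lemma~\ref{lem:starinv} closes with parameter $\delta$. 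The remaining steps --- verifying $\epsilon$-disjointness of the cumulative family (immediate, since at stage $i$ the new translates are pairwise disjoint and disjoint from $Y_{i-1}$, and shrinking each earlier $F_c$ was already accounted for), checking Borelness of $C$ and $c\mapsto F_c$ (immediate from Lemma~\ref{lem:bepack}), and confirming the density recursion --- are routine. I expect the main obstacle in writing this up cleanly to be precisely the constant-chasing needed to make ``$F_i$ chosen late enough'' rigorous, i.e. producing, for each $i$, a single F{\o}lner set simultaneously $(K,\delta)$-invariant and $(T,\eta)$-invariant for the finite set $T$ and small $\eta$ demanded by Lemmas~\ref{lem:starinv} and~\ref{lem:pack} at that stage --- but this is exactly what amenability (existence of $(K',\delta')$-invariant sets for arbitrary finite $K'$ and small $\delta'$) provides.
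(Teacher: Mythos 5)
Your overall architecture (iterate Lemma~\ref{lem:bepack} to pack, Lemma~\ref{lem:starinv} to certify $*$-invariance of the union built so far, Lemma~\ref{lem:pack} to drive the density recursion) matches the paper, but the step you single out as ``the heart of the Ornstein--Weiss idea'' is stated backwards, and as stated it fails. You claim that at stage $i$ the already-built set $Y_{i-1}$ is $(F_i^{-1},\delta_i)^*$-invariant ``for whatever $\delta_i$ we like provided $F_i$ is chosen invariant enough relative to the finite window witnessing the $*$-invariance of $Y_{i-1}$.'' The dependence goes the other way: $(F_i^{-1},\delta)^*$-invariance of $Y_{i-1}$ is obtained from Lemma~\ref{lem:starinv}, whose hypothesis is that each tile $F_c$ already laid down is $(F_i^{-1},\delta(1-\epsilon))$-invariant, i.e.\ the \emph{old} shapes must be large (F{\o}lner) relative to the \emph{new} shape $F_i$. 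Making $F_i$ more invariant (larger) after the fact makes this worse, not better: if $Y_{i-1}$ is a union of tiles of bounded size with lower Banach density around $\epsilon$, and $F_i$ is enormous compared to those tiles, then locally $F_i^{-1}Y_{i-1}$ swamps $Y_{i-1}$, so $|(F_i^{-1}Y_{i-1}\symd Y_{i-1})\cap Ux|$ is comparable to $|Ux|$ while $\delta|Y_{i-1}\cap Ux|$ is only about $\delta\epsilon|Ux|$; no choice of window $U$ rescues this. So the adaptive scheme ``choose $F_i$ late and very invariant'' cannot re-enter the loop, and this is not a constant-chasing issue but a wrong quantifier order.

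The correct arrangement, which is what the paper does, is to fix \emph{all} shapes in advance with a chain of invariance conditions --- $F_1,\dots,F_n$ each $(K,\epsilon)$-invariant and $F_i$ being $(F_j^{-1},\delta(1-\epsilon))$-invariant for all $j<i$ --- and then to pack in \emph{decreasing} order of index: first $F_n$ (the most invariant shape), then $F_{n-1}$, and so on. At the stage where $F_i$ is used, the set already constructed is a union of $\epsilon$-disjoint translates of shapes contained in $F_k$ with $k>i$, each of which is $(F_i^{-1},\delta(1-\epsilon))$-invariant by the advance choice, so Lemma~\ref{lem:starinv} applies and Lemma~\ref{lem:pack} gives the density increment. (Intuitively: big tiles go down first, and the later, relatively small tiles fit into the gaps; your proposal lays small tiles first and then asks huge tiles to respect them.) Two smaller points: the recursion $\lBD(Y_i)\ge(1-\epsilon(1+\delta))\lBD(Y_{i-1})+\epsilon$ unwinds to $\lBD(Y_n)\ge(1+\delta)^{-1}\bigl(1-(1-\epsilon(1+\delta))^n\bigr)$, not $1-(1-\epsilon(1+\delta))^n$ (the fixed point is $(1+\delta)^{-1}$), so the condition on $\delta$ must be chosen accordingly, as in the paper; and the target invariance of the shapes should be $(K,\epsilon)$ itself (the $(1-\epsilon)$-shrinking to $F_c$ is absorbed by the statement's clause (i), not by re-deriving $(K,\epsilon)$-invariance of $F_c$), so no extra $(K,\delta')$-bookkeeping for the shapes is needed beyond the chain condition above.
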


\begin{proof}
Fix $\delta > 0$ satisfying $(1 + \delta)^{-1}(1 - (1 + \delta) \epsilon)^n < \epsilon - 1 + (1 + \delta)^{-1}$. Fix a sequence of $(K, \epsilon)$-invariant sets $F_1, \ldots, F_n$ such that $F_i$ is $(F_j^{-1}, \delta (1 - \epsilon))$-invariant for all $1 \leq j < i \leq n$.

The set $C$ will be the disjoint union of sets $C_i$, $1 \leq i \leq n$. The construction will be such that $F_c \subseteq F_i$ and $|F_c| > (1 - \epsilon) |F_i|$ for $c \in C_i$. We will define $A_i = \bigcup_{i \leq k \leq n} \bigcup_{c \in C_k} F_c c$ and arrange that $A_{i+1} \cup F_i C_i = A_{i+1} \cup \bigcup_{c \in C_i} F_c c$ and
\begin{equation} \label{eqn:bdense}
\lBD(A_i) \geq (1 + \delta)^{-1} - (1 + \delta)^{-1} (1 - \epsilon (1 + \delta))^{n + 1 - i}.
\end{equation}
In particular, we will have $A_i = \bigcup_{i \leq k \leq n} F_k C_k$.

To begin, apply Lemma \ref{lem:bepack} with $Y = \emptyset$ and $T = F_n$ to get a Borel set $C_n$ and a Borel map $c \in C_n \mapsto F_c \subseteq F_n$ such that $|F_c| > (1 - \epsilon) |F_n|$, the sets $\{F_c c : c \in C_n\}$ are pairwise disjoint, $\bigcup_{c \in C_n} F_c c = F_n C_n$, and $|F_n C_n \cap F_n x| \geq \epsilon |F_n|$ for all $x \in X$. Applying Lemma \ref{lem:pack} with $A = \emptyset$ and $B = F_n C_n$ we find that the set $A_n = F_n C_n$ satisfies $\lBD(A_n) \geq \epsilon$.

Inductively assume that $C_n$ through $C_{i+1}$ have been defined and $A_n$ through $A_{i+1}$ are defined as above and satisfy (\ref{eqn:bdense}). Using $Y = A_{i+1}$ and $T = F_i$, apply Lemma \ref{lem:bepack} to get a Borel set $C_i$ and a Borel map $c \in C_i \mapsto F_c \subseteq F_i$ such that $|F_c| > (1 - \epsilon) |F_i|$, the sets $\{F_c c : c \in C_i\}$ are pairwise disjoint and disjoint with $A_{i+1}$, $A_{i+1} \cup \bigcup_{c \in C_i} F_c c = A_{i+1} \cup F_i C_i$, and $|(A_{i+1} \cup F_i C_i) \cap F_i x| \geq \epsilon |F_i|$ for all $x \in X$. The set $A_{i+1}$ is the union of an $\epsilon$-disjoint collection of $(F_i^{-1}, \delta (1 - \epsilon))$-invariant sets and has positive lower Banach density. So by Lemma \ref{lem:starinv} $A_{i+1}$ is $(F_i^{-1}, \delta)^*$-invariant. Applying Lemma \ref{lem:pack} with $A = A_{i+1}$, we find that $A_i = A_{i + 1} \cup F_i C_i$ satisfies
\begin{align*}
\lBD(A_i) & \geq (1 - \epsilon (1 + \delta)) \cdot \lBD(A_{i+1}) + \epsilon\\
 & \geq \frac{(1 - \epsilon (1 + \delta))}{(1 + \delta)} - (1 + \delta)^{-1} (1 - \epsilon (1 + \delta))^{n + 1 - i} + \frac{\epsilon (1 + \delta)}{1 + \delta}\\
 & = (1 + \delta)^{-1} - (1 + \delta)^{-1} (1 - \epsilon (1 + \delta))^{n + 1 - i}.
\end{align*}
This completes the inductive step and completes the definition of $C$. It is immediate from the construction that (i) and (ii) are satisfied. Clause (iii) also follows by noting that (\ref{eqn:bdense}) is greater than $1 - \epsilon$ when $i = 1$.
\end{proof}

We recall the following simple fact.

\begin{lemma} \cite[Lemma 2.3]{DowHucZha16} \label{lem:pinv}
If $F \subseteq G$ is $(K, \delta)$-invariant and $F'$ satisfies $|F' \symd F| < \epsilon |F|$ then $F'$ is $(K, \frac{(|K| + 1) \epsilon + \delta}{1 - \epsilon})$-invariant.
\end{lemma}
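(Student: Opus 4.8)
The plan is to compare $KF'$ and $F'$ to the ``good'' set $F$ and $KF$ using the elementary fact that for any sets $A,B,C$ one has $A\symd C\subseteq(A\symd B)\cup(B\symd C)$. Applying this twice gives the inclusion
\[
KF'\symd F'\ \subseteq\ (KF'\symd KF)\cup(KF\symd F)\cup(F\symd F'),
\]
so it suffices to bound the size of each of the three pieces on the right and then convert a bound in terms of $|F|$ into one in terms of $|F'|$.

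First I would record the size comparison between $F$ and $F'$: since $|F\symd F'|<\epsilon|F|$ we have $|F'|\ge|F\cap F'|=|F|-|F\setminus F'|>(1-\epsilon)|F|$, so any estimate of the form ``$\le t|F|$'' upgrades to ``$\le \frac{t}{1-\epsilon}|F'|$''. Next I would treat the three pieces. The middle piece is immediate from $(K,\delta)$-invariance: $|KF\symd F|<\delta|F|$. The last piece is the hypothesis: $|F\symd F'|<\epsilon|F|$. For the first piece, the key observation is that left translation by $K$ is compatible with symmetric differences in the sense that $KF'\symd KF\subseteq K(F'\symd F)$: if $kf'\in KF'\setminus KF$ with $k\in K$, $f'\in F'$, then $f'\notin F$ (otherwise $kf'\in KF$), so $f'\in F'\symd F$ and $kf'\in K(F'\symd F)$, and symmetrically for $KF\setminus KF'$. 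Hence $|KF'\symd KF|\le|K|\,|F'\symd F|<|K|\epsilon|F|$.

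Adding the three bounds yields $|KF'\symd F'|<\bigl((|K|+1)\epsilon+\delta\bigr)|F|$, and applying the comparison $|F|<\frac{1}{1-\epsilon}|F'|$ from the first step gives $|KF'\symd F'|<\frac{(|K|+1)\epsilon+\delta}{1-\epsilon}|F'|$, which is exactly the assertion that $F'$ is $\bigl(K,\frac{(|K|+1)\epsilon+\delta}{1-\epsilon}\bigr)$-invariant. I do not anticipate any genuine obstacle here: the argument is a routine symmetric-difference estimate, and the only two points that require a moment's care are the translation-compatibility inclusion $KF'\symd KF\subseteq K(F'\symd F)$ and remembering to pass from $|F|$ to $|F'|$ at the end via $|F'|>(1-\epsilon)|F|$.
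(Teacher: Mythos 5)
Your proposal is correct, and it is the standard argument: the paper itself states this lemma without proof, citing \cite[Lemma 2.3]{DowHucZha16}, and your decomposition $KF'\symd F'\subseteq (KF'\symd KF)\cup(KF\symd F)\cup(F\symd F')$ together with the inclusion $KF'\symd KF\subseteq K(F'\symd F)$ and the comparison $|F'|>(1-\epsilon)|F|$ is exactly how that cited lemma is proved. No gaps.
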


Now we present the main theorem.

\begin{theorem}\label{T-main}
Let $G$ be a countable amenable group, let $(X, \mu)$ be a standard probability space, and let $G \acts (X, \mu)$ be a free {\pmp} action. For every finite $K \subseteq G$ and every $\delta > 0$ there exist a $\mu$-conull $G$-invariant Borel set $X' \subseteq X$, a collection $\{C_i : 0 \leq i \leq m\}$ of Borel subsets of $X'$, and a collection $\{F_i : 0 \leq i \leq m\}$ of $(K, \delta)$-invariant sets such that $\{F_i c : 0 \leq i \leq m, \ c \in C_i\}$ partitions $X'$.
\end{theorem}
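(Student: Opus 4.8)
The plan is to follow the two-part strategy advertised in the introduction: first use Lemma~\ref{lem:bdense} (the Borel/Banach-density Ornstein--Weiss lemma) to tile a subset of $X$ of lower Banach density close to $1$ by dynamical translates of $(K',\delta')$-invariant shapes, where $K'$ and $\delta'$ are chosen more stringently than $K$ and $\delta$; then use the measurable-matching machinery (Proposition~\ref{prop:match}, via Lemma~\ref{lem:expansive}) to absorb almost all of the leftover points into the existing tiles, enlarging each shape by only a small proportion and invoking Lemma~\ref{lem:pinv} to see that approximate invariance survives this enlargement. The union of the enlarged tiles will be a conull set, and after discarding a null set we can pass to a $G$-invariant conull Borel set $X'$ on which the tiles genuinely partition.

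\emph{Step 1 (coarse tiling).} Given $K$ and $\delta$, choose $\eps>0$ small (to be pinned down by the error-accounting in Steps 2--3) and $K'\supseteq K$, $\delta'<\delta$ appropriately, pick $n$ with $(1-\eps)^n<\eps$, and apply Lemma~\ref{lem:bdense} to get $(K',\delta')$-invariant sets $F_1,\dots,F_n$, a Borel set $C\subseteq X$, and a Borel assignment $c\mapsto F_c$ with $F_c\subseteq F_{i(c)}$, $|F_c|>(1-\eps)|F_{i(c)}|$, the translates $F_c c$ pairwise disjoint, and $\lBD(D)>1-\eps$ where $D=\bigcup_{c\in C}F_cc$. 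The complement $X\setminus D$ has upper Banach density at most $\eps$, so in particular (since $\lBD\le$ the measure of a set for a p.m.p.\ action, or directly) $\mu(X\setminus D)$ is small; more importantly its Banach density is small, which is what the matching step needs.

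\emph{Step 2 (matching the leftovers).} Set up a bipartite relation $\cR\subseteq (X\setminus D)\times C$ whereby a leftover point $x$ is related to a tiling center $c$ exactly when $x$ can be attached to the tile $F_cc$ without spoiling its shape --- concretely, $x=gc$ for some $g$ in a fixed finite ``enlargement'' set $L\subseteq G$ with $L$ disjoint from each admissible $F_i$ and $\{1_G\}\cup L)F_i$ only slightly bigger than $F_i$. Because $D$ has Banach density close to $1$, each leftover point sees many nearby centers while each center is seen by only boundedly many leftover points; Lemma~\ref{lem:expansive} then gives an expansion constant, and Proposition~\ref{prop:match} produces a Borel injection $f$ defined off a null set assigning each (a.e.) leftover point to a distinct center. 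Redefine $F_c':=F_c\cup\{g : f(gc)=c\}$; distinctness of $f$ keeps the translates $F_c'c$ pairwise disjoint, the union is now $\mu$-conull, and $|F_c'\symd F_c|<\eps|F_i|<\frac{\eps}{1-\eps}|F_c|$, so Lemma~\ref{lem:pinv} upgrades each $F_c'$ to $(K,\delta)$-invariant provided $\eps$ and $\delta'$ were chosen small enough relative to $\delta$ and $|K|$. Finally group the centers by which enlarged shape $F_i'$ (finitely many possibilities, since each $F_c'$ is an enlargement of some $F_i$ inside a fixed finite set) they carry, intersect everything with a $G$-invariant conull Borel set $X'$ on which the matching and tiling relations are honest, and relabel to obtain the collections $\{C_i\}$, $\{F_i\}$, $0\le i\le m$, partitioning $X'$.

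\emph{Main obstacle.} The delicate point is not any single step but the \emph{order of quantifiers in the error budget}: $n$ in Lemma~\ref{lem:bdense} depends on $\eps$, the coarse shapes $F_i$ depend on $n$, the enlargement set $L$ must be chosen after the $F_i$ (disjoint from them and large enough that each point of positive Banach density lies within $L$ of $D$), and only then can one verify expansivity of $\cR$ and that the final $(K,\delta)$-invariance bound closes. One must lay out the choices in the sequence $\delta\rightsquigarrow\eps,\delta'\rightsquigarrow n\rightsquigarrow F_i\rightsquigarrow L\rightsquigarrow\cR$ and check at the end that Lemma~\ref{lem:pinv} applied with distortion $\eps/(1-\eps)$ and invariance parameter coming from $\delta'$ indeed lands below $\delta$ --- this is the bookkeeping that makes the proof work, and getting the dependencies consistent (especially ensuring the matching relation is genuinely expansive using that $X\setminus D$ has small \emph{Banach} density, not merely small measure) is where the real care is needed.
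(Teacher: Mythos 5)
Your Step 1 matches the paper, but Step 2 contains a genuine gap: you take the matching relation to be $\cR\subseteq (X\setminus D)\times C$ and ask Proposition~\ref{prop:match} for an injection sending (almost) every leftover point to a \emph{distinct center}. Such an injection cannot exist in general, and the relation cannot be expansive. Indeed $\cR$ is $(\mu|_{X\setminus D},\mu|_C)$-preserving, so a compatible injection defined on a conull part of $X\setminus D$ forces $\mu(X\setminus D)\leq\mu(C)$; but $\mu(C)\leq \mu(X)/\min_c|F_c|\approx 1/\min_i|F_i|$, and the shapes $F_i$ produced by Lemma~\ref{lem:bdense} must be very invariant (they are chosen \emph{after} $\eps$ and must be successively invariant with respect to one another), hence enormous, while the leftover set typically has measure of order $\eps$, which is vastly larger than $1/\min_i|F_i|$. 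The same density comparison kills expansivity: to apply Lemma~\ref{lem:expansive} you would need each leftover point to see more centers than each center sees leftover points, i.e.\ a comparison of the center density ($\approx 1/|F_i|$) against the leftover density ($\leq\eps$), and this goes the wrong way no matter how the enlargement set $L$ is chosen. So the difficulty is not the order-of-quantifiers bookkeeping you flag at the end; it is that the target of the matching is wrong.

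The paper's proof fixes exactly this point: instead of matching into $C$, it designates inside each tile a Borel set of ``slots'' $Z_c\subseteq F_c$ with $4\eps|F_c|<|Z_c|<5\eps|F_c|$ (possible because one may arrange $\eps|F_c|>1$), and matches $Y=X\setminus D$ into $Z=\bigcup_{c\in C}Z_c c$, which has measure comparable to $\eps$ and hence is large enough to absorb $Y$. Expansivity is then verified locally: taking a symmetric, highly invariant window $U$ with $\inf_x|(X\setminus Y)\cap Ux|>(1-\eps)|U|$, one shows $|Y\cap Uz|<\eps|U|$ while $|Z\cap Uy|>2\eps|U|$ (more than half of $Uy$ is covered by tiles lying entirely inside $Uy$, and at least a $4\eps$ proportion of those tiles consists of slots), so Lemma~\ref{lem:expansive} gives constant $2$. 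Injectivity into $Z$ then automatically caps the number of points each tile absorbs by $|Z_c|<5\eps|F_c|$, yielding $|F_i\symd F_j'|<6\eps|F_j'|$ for the enlarged shapes, after which Lemma~\ref{lem:pinv} with $\eps$ chosen so that $\frac{(|K|+1)6\eps+\eps}{1-6\eps}<\delta$ gives $(K,\delta)$-invariance, exactly in the spirit of your final accounting. To repair your write-up you would need to replace the centers by such a slot set (or some equivalent device bounding, per tile, the number of absorbed points by a quantity proportional to $\eps|F_c|$ while keeping the target large enough to receive all of $Y$); with an injection into $C$ the argument cannot be made to work.
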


\begin{proof}
Fix $\epsilon \in (0, 1 / 2)$ satisfying $\frac{(|K| + 1) 6 \epsilon + \epsilon}{1 - 6 \epsilon} < \delta$. Apply Lemma \ref{lem:bdense} to get $(K, \epsilon)$-invariant sets $F_1' ,\ldots, F_n'$, a Borel set $C \subseteq X$, and a Borel function $c \in C \mapsto F_c \subseteq G$ satisfying
\begin{enumerate}
\item[\rm (i)] for every $c \in C$ there is $1 \leq i \leq n$ with $F_c \subseteq F_i'$ and $|F_c| > (1 - \epsilon) |F_i'|$;
\item[\rm (ii)] the sets $F_c c$, $c \in C$, are pairwise disjoint; and
\item[\rm (iii)] $\lBD(\bigcup_{c \in C} F_c c) > 1 - \epsilon$.
\end{enumerate}
Set $Y = X \setminus \bigcup_{c \in C} F_c c$. If $\mu(Y) = 0$ then we are done. So we assume $\mu(Y) > 0$ and we let $\nu$ denote the restriction of $\mu$ to $Y$. Fix a Borel map $c \in C \mapsto Z_c \subseteq F_c$ satisfying $4 \epsilon |F_c| < |Z_c| < 5 \epsilon |F_c|$ for all $c \in C$ (it's clear from the proof of Lemma \ref{lem:bdense} that we may choose the sets $F_i'$ so that $\epsilon |F_c| > \min_i \epsilon (1 - \epsilon) |F_i'| > 1$). Set $Z = \bigcup_{c \in C} Z_c c$ and let $\zeta$ denote the restriction of $\mu$ to $Z$ (note that $\mu(Z) > 0$).

Set $W = \bigcup_{i=1}^n F_i'$ and $W' = W W^{-1}$. Fix a finite set $U \subseteq G$ which is $(W', (1/2 - \epsilon) / |W'|)$-invariant and satisfies $\inf_{x \in X} |(X \setminus Y) \cap U x| > (1 - \epsilon) |U|$. Since every amenable group admits a F{\o}lner sequence consisting of symmetric sets, we may assume that $U = U^{-1}$ \cite[Corollary 5.3]{Nam64}. Define $\cR \subseteq Y \times Z$ by declaring $(y, z) \in \cR$ if and only if $y \in U z$ (equivalently $z \in U y$). Then $\cR$ is Borel, locally finite, and $(\nu, \zeta)$-preserving. We now check that $\cR$ is expansive. We automatically have $|\cR^z| = |Y \cap U z| < \epsilon |U|$ for all $z \in Z$. By Lemma \ref{lem:expansive} it suffices to show that $|\cR_y| = |Z \cap U y| \geq 2 \epsilon |U|$ for all $y \in Y$. Fix $y \in Y$. Let $B$ be the set of $b \in U y$ such that $W' b \not\subseteq U y$. Then $B \subseteq W' (W' U y \symd U y)$ and thus
$$\frac{|B|}{|U|} \leq |W'| \cdot \frac{|W' U \symd U|}{|U|} < 1 / 2 - \epsilon.$$
Let $A$ be the union of those sets $F_c c$, $c \in C$, which are contained in $U y$. Notice that $(X \setminus Y) \cap U y \subseteq B \cup A$. Therefore
$$\frac{1}{2} - \epsilon + \frac{|A|}{|U|} > \frac{|(B \cup A) \cap U y|}{|U|} \geq \frac{|(X \setminus Y) \cap U y|}{|U|} > 1 - \epsilon,$$
hence $|A| > |U| / 2$. By construction $|Z \cap A| > 4 \epsilon |A|$. So $|Z \cap U y| \geq |Z \cap A| >  2 \epsilon |U|$. We conclude that $\cR$ is expansive.

Apply Proposition \ref{prop:match} to obtain a $G$-invariant $\mu$-conull set $X' \subseteq X$ and a Borel injection $\rho : Y \cap X' \rightarrow Z$ with $\graph(\rho) \subseteq \cR$. Consider the sets $F_c \cup \{g \in U : g c \in Y \text{ and } \rho(g c) \in F_c c\}$ as $c \in C$ varies. These are subsets of $W \cup U$ and thus there are only finitely many such sets which we can enumerate as $F_1, \ldots, F_m$. We partition $C \cap X'$ into Borel sets $C_1, \ldots, C_m$ with $c \in C_i$ if and only if $c \in X'$ and $F_c \cup \{g \in U : g c \in Y \text{ and } \rho(g c) \in F_c c\} = F_i$. Since $\rho$ is defined on all of $Y \cap X'$, we see that the sets $\{F_i c : 1 \leq i \leq m, \ c \in C_i\}$ partition $X'$. Finally, for $c \in C_i \cap X'$, if we let $F_j'$ be such that $|F_c \symd F_j'| < \epsilon |F_j'|$, then
\begin{align*}
|F_i \symd F_j'| \leq |F_i \symd F_c| + |F_c \symd F_j'|
&\leq |\rho^{-1}(F_c c)| + \epsilon |F_j'| \\
&\leq |Z_c| + \epsilon |F_j'|
< 5 \epsilon |F_c| + \epsilon |F_j'| \leq 6 \epsilon |F_j'|.
\end{align*}
Using Lemma \ref{lem:pinv} and our choice of $\epsilon$, this implies that each set $F_i$ is $(K, \delta)$-invariant.
\end{proof}

\section{Clopen tower decompositions with F{\o}lner shapes}\label{S-Cantor}

Let $G\curvearrowright X$ be an action of a group on a compact space.
By a {\it clopen tower} we mean a pair $(B,S)$ where $B$ is a clopen subset of $X$
(the {\it base} of the tower) and $S$ is a finite subset of $G$ (the {\it shape} of the tower)
such that the sets $sB$ for $s\in S$ are pairwise disjoint.
By a {\it clopen tower decomposition} of $X$ we mean a finite collection $\{ (B_i , S_i ) \}_{i=1}^n$
of clopen towers such that the sets $S_1 B_1 , \dots , S_n B_n$ form a partition of $X$.
We also similarly speak of {\it measurable towers} and {\it measurable tower decompositions}
for an action $G\curvearrowright (X,\mu )$ on a measure space, with the bases now being measurable sets
instead of clopen sets. In this terminology, Theorem~\ref{T-main} says that if
$G\curvearrowright (X,\mu )$ is a free p.m.p.\ action of a countable amenable group on
a standard probability space then for every finite set $K\subseteq G$ and $\delta > 0$
there exists, modulo a null set, a measurable tower decomposition of $X$ with $(K,\delta )$-invariant shapes.

\begin{lemma}\label{L-clopen}
Let $G$ be a countably infinite amenable group
and $G\curvearrowright X$ a free minimal action on the Cantor set.
Then this action has a free minimal extension $G\curvearrowright Y$ on
the Cantor set such that for every finite set $F\subseteq G$ and $\delta > 0$ there is a
clopen tower decomposition of $Y$ with $(F,\delta )$-invariant shapes.
\end{lemma}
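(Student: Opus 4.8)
The plan is to combine Theorem~\ref{T-main} with a symbolic-extension argument. Since $G$ is amenable and $X$ is compact, fix a $G$-invariant Borel probability measure $\mu$ on $X$, so that $G\acts(X,\mu)$ is free and p.m.p. Fix also an exhaustion of $G$ by finite symmetric sets $F_n\ni 1_G$ and put $\delta_n=1/n$; a routine computation shows that any prescribed pair $(F,\delta)$ is eventually \emph{dominated}, in the sense that $(F_n,\delta_n)$-invariance implies $(F,\delta)$-invariance. For each $n$, Theorem~\ref{T-main} supplies a $\mu$-conull $G$-invariant Borel set $X_n'$, finitely many $(F_n,\delta_n)$-invariant shapes $S^{(n)}_1,\dots,S^{(n)}_{m_n}$, and Borel ``centre sets'' $C^{(n)}_i\subseteq X_n'$ such that $\{S^{(n)}_ic : c\in C^{(n)}_i\}_i$ partitions $X_n'$. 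After replacing each pair $(S^{(n)}_i,C^{(n)}_i)$ by $(S^{(n)}_is_0^{-1},s_0C^{(n)}_i)$ for a suitable $s_0\in S^{(n)}_i$ — which leaves the partition unchanged and preserves $(F_n,\delta_n)$-invariance of the shapes — I may assume $1_G\in S^{(n)}_i$ for all $i,n$. Then on the conull $G$-invariant set $X''=\bigcap_nX_n'$ every point $x$ lies at a unique position $s\in S^{(n)}_i$ in a unique tile $S^{(n)}_ic$ ($c\in C^{(n)}_i$) of the $n$-th decomposition, and setting $\xi_n(x)=(i,s)$ defines a Borel map $\xi_n\colon X''\to L_n:=\bigsqcup_i\{i\}\times S^{(n)}_i$.

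Next I would form the full shift $Z_n=L_n^{\,G}$ with the action $(h\cdot\omega)(g)=\omega(gh)$ and the $G$-equivariant Borel map $\Theta_n\colon X''\to Z_n$ given by $\Theta_n(x)(g)=\xi_n(gx)$; then set $\Theta=(\mathrm{id}_{X''},(\Theta_n)_n)\colon X''\to X\times\prod_nZ_n$ and $Y_0=\overline{\Theta(X'')}$, a closed $G$-invariant subset of the compact, metrizable, zero-dimensional space $X\times\prod_nZ_n$. The first-coordinate projection $\pi\colon Y_0\to X$ is equivariant, and its image is a nonempty closed invariant subset of the minimal system $G\acts X$, hence all of $X$. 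Let $Y\subseteq Y_0$ be a minimal subsystem; then $\pi\restriction Y$ is still onto $X$ by minimality, so $Y$ is an extension of $G\acts X$. As extensions of free actions are free, $Y$ is free; and $Y$ is an infinite minimal system — it maps onto the infinite space $X$ — so it has no isolated points, and being compact, metrizable and zero-dimensional it is the Cantor set.

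It remains to exhibit the clopen tower decompositions. Fix $n$, let $q_n\colon Y_0\to Z_n$ be the coordinate projection (continuous and equivariant), and note $q_n\circ\Theta=\Theta_n$, so $q_n(Y_0)\subseteq\overline{\Theta_n(X'')}=:\overline{Z}_n$. By construction every $\omega\in\Theta_n(X'')$ obeys the local rule $(\ast)$: for all $g\in G$, if $\omega(g)=(i,s)$ then $\omega(ts^{-1}g)=(i,t)$ for every $t\in S^{(n)}_i$. For each fixed $g$ this is a clopen condition on $Z_n$, so the conjunction over all $g$ cuts out a closed set; hence $(\ast)$ holds on all of $\overline{Z}_n$. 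Using $(\ast)$ together with $1_G\in S^{(n)}_i$ one verifies that the clopen towers $(B^{(n)}_i,S^{(n)}_i)$, where $B^{(n)}_i=\{\omega\in\overline{Z}_n:\omega(1_G)=(i,1_G)\}$, form a clopen tower decomposition of $\overline{Z}_n$: from $\omega(1_G)=(i,s)$ one deduces $s^{-1}\cdot\omega\in B^{(n)}_i$ (covering), and if $s_1^{-1}\cdot\omega\in B^{(n)}_{i_1}$ and $s_2^{-1}\cdot\omega\in B^{(n)}_{i_2}$ then $(\ast)$ forces $(i_1,s_1)=(i_2,s_2)$ (disjointness). Pulling this decomposition back along $q_n$ gives a clopen tower decomposition of $Y_0$ with the same shapes $S^{(n)}_i$, and intersecting each of its pieces with $Y$ gives a clopen tower decomposition of $Y$ with shapes $S^{(n)}_i$. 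Given an arbitrary finite $F\subseteq G$ and $\delta>0$, choosing $n$ with $(F_n,\delta_n)$ dominating $(F,\delta)$ makes these shapes $(F,\delta)$-invariant, as required.

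The step I expect to demand the most care is the last one. Theorem~\ref{T-main} only produces a partition of a conull subset of $X$ — that is, of the dense subset $\Theta_n(X'')$ of $\overline{Z}_n$ — and the point of the construction is that the local rules $(\ast)$ encoding ``this configuration is a tiling by the shapes $S^{(n)}_i$'' are \emph{closed} conditions, so they persist on the closure $\overline{Z}_n$; this is what upgrades the measurable tower decomposition to a genuine clopen one on a compact system, which then pulls back to the compact extension $Y$.
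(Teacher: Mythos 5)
Your argument is correct, and it reaches the lemma by a genuinely different mechanism than the paper, although the overall skeleton is the same: both proofs fix an invariant measure, invoke Theorem~\ref{T-main} for a sequence $(F_n,1/n)$ that eventually dominates any prescribed $(F,\delta)$, build a zero-dimensional topological extension of $X$ in which the measurable tower decompositions become clopen, pass to a minimal closed invariant subset $Y$ (free because it extends a free action, a Cantor set because an infinite compact minimal system has no isolated points), and restrict the clopen towers to $Y$. The difference lies in how the extension is topologized. The paper takes the Gelfand spectrum $Z$ of the separable $G$-invariant C$^*$-subalgebra of $L^\infty(X,\mu)$ generated by $C(X)$ and the indicator functions of all tower levels; since the partition identities among these indicators hold exactly in $L^\infty(X,\mu)$, they become identities among indicators of clopen sets in $C(Z)$, so the clopen tower decompositions of $Z$ come for free, with no combinatorics to verify. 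You instead code each decomposition into a finite-alphabet shift $L_n^G$ and take the closure of the image of the orbit-coding map inside $X\times\prod_n L_n^G$; the price is exactly the verification you carry out, namely that the tiling is enforced by closed local rules (your condition $(\ast)$, using the harmless normalization $1_G\in S^{(n)}_i$), so it persists on the closure and yields genuine clopen towers there, which then pull back and restrict to $Y$. One small point to patch: $\xi_n(x)$ is only well defined after you fix, for each tile of the partition, a single representation $(i,c)$, since the same subset of $X$ could in principle occur as $S^{(n)}_{i_1}c_1=S^{(n)}_{i_2}c_2$ with different labels (this can genuinely happen when $G$ has torsion); making a Borel choice of label per tile, rather than per point, is routine and keeps $(\ast)$ valid, so this is cosmetic. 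In exchange for that extra bookkeeping, your route avoids Gelfand duality altogether and stays within Borel and symbolic dynamics, while the paper's route trades the combinatorial check for the C$^*$-algebraic formalism.
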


\begin{proof}
Let $F_1 \subseteq F_2 \subseteq\dots$ be an increasing sequence of finite subsets of $G$
whose union is equal to $G$. Fix a $G$-invariant Borel probability measure $\mu$ on $X$
(such a measure exists by amenability). The freeness of the action $G\curvearrowright X$
means that for each $n\in\Nb$ we can apply Theorem \ref{T-main} to produce, modulo a null set,
a measurable tower decomposition $\cU_n$ for the p.m.p.\ action $G\curvearrowright (X,\mu )$
such that each shape is $(F_n ,1/n)$-invariant.
Let $A$ be the unital $G$-invariant C$^*$-algebra of $L^\infty (X,\mu )$ generated by
$C(X)$ and the indicator functions of the levels of each of the tower decompositions $\cU_n$.
Since there are countably many such indicator functions and the group $G$ is countable,
the C$^*$-algebra $A$ is separable.
Therefore by the Gelfand--Naimark theorem we have $A = C(Z)$ for some zero-dimensional metrizable space
$Z$ and a $G$-factor map $\varphi : Z \to X$.
By a standard fact which can be established using Zorn's lemma,
there exists a nonempty closed $G$-invariant set $Y \subseteq Z$
such that the restriction action $G\curvearrowright Y$ is minimal. Note that $Y$ is
necessarily a Cantor set, since $G$ is infinite. Also, the action $G\curvearrowright Y$ is free,
since it is an extension of a free action.
Since the action on $X$ is minimal, the restriction $\varphi |_Y : Y \to X$ is surjective
and hence a $G$-factor map.
For each $n$ we get from $\cU_n$ a clopen tower decomposition $\cV_n$ of $Y$ with $(F_n ,1/n)$-invariant shapes,
and by intersecting the levels of the towers in $\cV_n$ with $Y$
we obtain a clopen tower decomposition of $Y$ with $(F_n ,1/n)$-invariant shapes,
showing that the extension $G\curvearrowright Y$ has the desired property.
\end{proof}

Let $X$ be the Cantor set and let $G$ be a countable infinite amenable group.
The set $\Act (G,X)$ is a Polish space under the topology which has as a basis the sets
\[
U_{\alpha ,\cP ,F} = \{ \beta\in\Act (G,X) : \alpha_s A = \beta_s A \text{ for all } A\in\cP \text{ and } s\in F \}
\]
where $\alpha\in\Act (G,X)$, $\cP$ is a clopen partition of $X$, and $F$ is a finite subset of $G$.
Write $\frmin (G,X)$ for the set of actions in $\Act (G,X)$ which are free and minimal.
Then $\frmin (G,X)$ is a $G_\delta$ set. To see this, fix an enumeration $s_1 , s_2 , s_3 , \dots$ of $G\setminus \{ e \}$ (where $e$ denotes the identity element of the group)
and for every $n\in\Nb$ and nonempty clopen set $A\subseteq X$
define the set $\cW_{n,A}$ of all $\alpha\in\Act (G,X)$ such that (i)
$\bigcup_{s\in F} \alpha_s A = X$ for some finite set $F\subseteq G$,
and (ii) there exists a clopen partition $\{ A_1 , \dots , A_k \}$ of $A$ such that
$\alpha_{s_n} A_i \cap A_i = \emptyset$ for all $i=1,\dots ,k$.
Then each $\cW_{n,A}$ is open, which means, with $A$ ranging over the countable collection of
nonempty clopen subsets of $X$, that the intersection
$\bigcap_{n\in\Nb} \bigcap_A \cW_{n,A}$, which is equal to $\frmin (G,X)$, is a $G_\delta$ set.
It follows that $\frmin (G,X)$ is a Polish space.

\begin{theorem}\label{T-dense G delta}
Let $G$ be a countably infinite amenable group. Let $\cC$ be the collection of actions
in $\frmin (G,X)$ with the property that for every finite set $F\subseteq G$ and $\delta > 0$ there is a
clopen tower decomposition of $X$ with $(F,\delta )$-invariant shapes. Then
$\cC$ is a dense $G_\delta$ subset of $\frmin (G,X)$.
\end{theorem}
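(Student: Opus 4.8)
The plan is to establish the two assertions separately: that $\cC$ is $G_\delta$ and that $\cC$ is dense. For the $G_\delta$ part, I would express membership in $\cC$ as a countable intersection of open conditions. Fix an increasing sequence of finite sets $F_1 \subseteq F_2 \subseteq \cdots$ with union $G$. For each $n$, let $\cC_n$ be the set of $\alpha \in \frmin(G,X)$ admitting a clopen tower decomposition with $(F_n, 1/n)$-invariant shapes. Since having such a decomposition with given finite shapes and given clopen bases is a condition that only constrains the values $\alpha_s A$ for $s$ ranging over the finitely many shapes and $A$ over the finitely many (clopen) base-refinements, each $\cC_n$ is open in $\frmin(G,X)$: if $\alpha$ has such a decomposition witnessed by a clopen partition, any $\beta$ agreeing with $\alpha$ on the relevant finite data has the same decomposition. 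Then $\cC = \bigcap_n \cC_n$ is $G_\delta$. (One must check the quantifier "for every finite $F$ and $\delta>0$" reduces to the sequence $(F_n,1/n)$, which is immediate by monotonicity since an $(F_n,1/n)$-invariant shape is $(F,\delta)$-invariant once $F \subseteq F_n$ and $1/n$ is small enough relative to $\delta$ and $|F|$.)

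For density, I would use Lemma~\ref{L-clopen} together with the fact that conjugate actions are "the same" from the point of view of having clopen tower decompositions with prescribed invariant shapes, and that every $G_\delta$-orbit-closure phenomenon can be transported. Concretely: let $\alpha \in \frmin(G,X)$ be arbitrary and let $U_{\alpha, \cP, F}$ be a basic open neighborhood. By Lemma~\ref{L-clopen}, the action $(X,\alpha)$ has a free minimal extension $G \curvearrowright Y$ on the Cantor set with arbitrarily good clopen tower decompositions, i.e., the extension lies (after identifying $Y$ with $X$ via a homeomorphism) in $\cC$. The extension comes with a factor map $\varphi\colon Y \to X$. The key point is that a factor map between Cantor systems can be used to produce, inside any neighborhood of $\alpha$, a conjugate copy of the extension: choose a clopen partition $\cQ$ of $Y$ refining $\varphi^{-1}\cP$ and fine enough that the $\cQ$-names separate points, use it to build a homeomorphism $Y \to X$ that intertwines the extension action with an action $\beta$ that agrees with $\alpha$ on $\cP$ across $F$, hence $\beta \in U_{\alpha,\cP,F}$. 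Since having good clopen tower decompositions is a conjugacy invariant, $\beta \in \cC$.

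The main obstacle is the density argument, specifically the construction of $\beta \in U_{\alpha,\cP,F} \cap \cC$ from the extension $G\curvearrowright Y$. The subtlety is that $\varphi$ is only a factor map, not an isomorphism, so $Y$ may be "larger" than $X$; one cannot literally conjugate. The right way to handle this is: the extension $G\curvearrowright Y$ is itself a free minimal action on the Cantor set, and every free minimal action on the Cantor set that factors onto $\alpha$ via a map that is injective on a suitable clopen partition gives, upon transporting along a homeomorphism $Y\cong X$ compatible with that partition, a $\beta$ in the prescribed neighborhood. So the real content is choosing the homeomorphism $Y \to X$ carefully: pick a finite clopen partition $\cR$ of $X$ that generates the topology well enough to pin down elements of $U_{\alpha,\cP,F}$ (i.e., $\cR$ refines $\cP$), pull it back and refine on the $Y$ side, and match up atoms. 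I expect the bookkeeping — ensuring the transported action agrees with $\alpha$ exactly on $\cP$ for all $s \in F$ while preserving the clopen tower decompositions of $Y$ (which survive any homeomorphism) — to be the part requiring care, though it is conceptually routine once the framework of "conjugacy invariance of $\cC$ plus Lemma~\ref{L-clopen}" is in place.
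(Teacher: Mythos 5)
Your skeleton is the paper's: the $G_\delta$ part via openness of each finite-stage condition (fine, and the paper leaves it as an exercise), and density via Lemma~\ref{L-clopen} plus transporting the free minimal extension $G\curvearrowright Y$ into the prescribed basic neighborhood by a homeomorphism, using that membership in $\cC$ is a conjugacy invariant. The gap is that the step you defer as ``conceptually routine bookkeeping'' is the entire content of the density argument, and the concrete choices you do spell out would not work as stated. If $\psi$ is a homeomorphism matching each atom $A_i$ of a clopen partition $\cR$ of $X$ with its preimage $\varphi^{-1}(A_i)$, then for $A\in\cP$ and $s\in F$ one computes, for the transported action $\gamma=\psi^{-1}\circ\beta\circ\psi$ (paper's orientation $\psi\colon X\to Y$), that $\gamma_s A=\psi^{-1}\beta_s\psi(A)=\psi^{-1}\varphi^{-1}(\alpha_s A)$, using the intertwining $\varphi\circ\beta_s=\alpha_s\circ\varphi$; to conclude this equals $\alpha_s A$ you need $\alpha_s A$ (not only $A$) to be a union of atoms of $\cR$. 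So ``$\cR$ refines $\cP$'' is insufficient: the partition must be refined by the $\alpha$-translates of $\cP$ over $F$ --- this is exactly why the paper takes $\cR=\bigvee_{s\in F}s^{-1}\cP$ (harmlessly enlarging $F$ to contain $e$ and be symmetric, which only shrinks the target neighborhood). Your one explicitly named refinement condition is precisely the one under which the verification for $s\neq e$ breaks down.

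Second, the device ``choose $\cQ$ fine enough that the $\cQ$-names separate points and use it to build a homeomorphism $Y\to X$'' is both unnecessary and not a construction that yields what you want: name maps give a factor onto a subshift over the alphabet $\cQ$, not a homeomorphism with the given space $X$. The actual mechanism is much blunter and is the missing idea: each atom $A_i$ of $\bigvee_{s\in F}s^{-1}\cP$ and its preimage $\varphi^{-1}(A_i)$ are nonempty clopen subsets of Cantor sets (nonempty because $\varphi$ is surjective, which follows from minimality of $\alpha$), hence themselves Cantor sets, so one may pick an arbitrary homeomorphism $\psi_i\colon A_i\to\varphi^{-1}(A_i)$ for each $i$ and glue to get $\psi$. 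With this $\psi$, both $\psi$ and $\psi^{-1}$ act like $\varphi^{-1}$ and $\varphi$ on unions of atoms, the computation above gives $\gamma\in U_{\alpha,\cP,F}$, and $\gamma\in\cC$ since clopen tower decompositions transport along $\psi$. In short: right framework, but the partition choice and the atom-by-atom Cantor homeomorphism onto $\varphi$-preimages are the nontrivial content, and your substitute (refine $\cP$; separate points by names) does not supply them.
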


\begin{proof}
That $\cC$ is a $G_\delta$ set is a simple exercise. Let $G\stackrel{\alpha}{\curvearrowright} X$
be any action in $\frmin (G,X)$.
By Lemma~\ref{L-clopen} this action has a free minimal extension $G\stackrel{\beta}{\curvearrowright} Y$
with the property in the theorem statement, where $Y$ is the Cantor set. Let $\cP$ be a clopen partition of $X$
and $F$ a nonempty finite subset of $G$. Write $A_1 , \dots , A_n$ for the members of the clopen partition
$\bigvee_{s\in F} s^{-1} \cP$.
Then for each $i=1,\dots ,n$ the set $A_i$ and its inverse image $\varphi^{-1} (A_i)$ under the extension map
$\varphi :Y\to X$ are Cantor sets, and so we can find a homeomorphism $\psi_i : A_i \to \varphi^{-1} (A_i )$.
Let $\psi : X\to Y$ be the homeomorphism which is equal to $\psi_i$ on $A_i$ for each $i$.
Then the action $G\stackrel{\gamma}{\curvearrowright} X$ defined by $\gamma_s = \psi^{-1} \circ\beta_s \circ\psi$
for $s\in G$ belongs to $\cC$ as well as to the basic open neighborhood $U_{\alpha ,\cP ,F}$ of $\alpha$,
establishing the density of $\cC$.
\end{proof}

\section{Applications to $\cZ$-stability}\label{S-Z-stability}

A C$^*$-algebra $A$ is said to be {\it $\cZ$-stable} if $A\otimes\cZ \cong A$ where
$\cZ$ is the Jiang--Su algebra \cite{JiaSu99}, with
the C$^*$-tensor product being unique in this case because $\cZ$ is nuclear.
$\cZ$-stability has become an important regularity property in the classification program
for simple separable nuclear C$^*$-algebras, which has recently witnessed some spectacular advances.
Thanks to recent work of Gong--Lin--Niu \cite{GonLinNiu15},
Elliott--Gong--Lin--Niu \cite{EllGonLinNiu15},
and Tikuisis--White--Winter \cite{TikWhiWin17}, it is now known that simple separable unital C$^*$-algebras
satisfying the universal coefficient theorem and having finite nuclear dimension
are classified by ordered $K$-theory paired with tracial states. Although $\cZ$-stability
does not appear in the hypotheses of this classification theorem, it does play an important
technical role in the proof. Moreover, it is a conjecture of Toms and Winter that
for simple separable infinite-dimensional unital nuclear C$^*$-algebras
the following properties are equivalent:
\begin{enumerate}
\item $\cZ$-stability,

\item finite nuclear dimension,

\item strict comparison.
\end{enumerate}
Implications between (i), (ii), and (iii) are known to hold in various degrees of generality.
In particular, the implication (ii)$\Rightarrow$(i) was established in \cite{Win12} while the converse
is known to hold when the extreme boundary of the convex set of tracial states is compact \cite{BosBroSatTikWhiWin16}.
It remains a problem to determine whether any of the crossed products of the actions in Theorem~\ref{T-comeager}
falls within the purview of these positive results on the Toms--Winter conjecture,
and in particular whether any of them has finite nuclear dimension (see Question~\ref{Q-nuclear dimension}).

By now there exist highly effectively methods for establishing finite nuclear dimension
for crossed products of free actions on compact metrizable spaces of
finite covering dimension \cite{Sza15,SzaWuZac14,GueWilYu16},
but their utility is structurally restricted to groups with finite asymptotic dimension
and hence excludes many amenable examples like the Grigorchuk group. One can show using the
technology from \cite{GueWilYu16} that, for a countably infinite amenable group with finite asymptotic
dimension, the crossed product of a generic free minimal action on the Cantor set
has finite nuclear dimension. Our intention here has been to remove the restriction of finite
asymptotic dimension by means of a different approach that establishes instead the
conjecturally equivalent property of $\cZ$-stability but for arbitrary countably infinite amenable groups.

To verify $\cZ$-stability in the proof of Theorem~\ref{T-Z-stability}
we will use the following result of Hirshberg and Orovitz \cite{HirOro13}.
Recall that a linear map $\varphi :A\to B$ between C$^*$-algebras is said to be
{\it complete positive} if its tensor product
$\id\otimes \varphi : M_n \otimes A \to M_n \otimes B$ with the identity map
on the $n\times n$ matrix algebra $M_n$ is positive for every $n\in\Nb$.
It is of {\it order zero} if $\varphi (a)\varphi (b)  = 0$ for all $a,b\in A$ satisfying
$ab = 0$. One can show that $\varphi$ is an order-zero completely positive map
if and only if there is an embedding $B\subseteq D$ of $B$ into a larger C$^*$-algebra,
a $^*$-homomorphism $\pi : A\to D$, and a positive element $h\in D$ commuting
with the image of $\pi$ such that $\varphi (a) = h\pi (a)$ for all $a\in A$ \cite{WinZac09}.
Below $\precsim$ denotes the relation of Cuntz subequivalence, so that $a\precsim b$
for positive elements $a,b$ in a C$^*$-algebra $A$ means that
there is a sequence $( v_n )$ in $A$ such that $\lim_{n\to\infty} \| a - v_n b v_n^* \| = 0$.

\begin{theorem}\label{T-Z-stable}
Let $A$ be a simple separable unital nuclear C$^*$-algebra not isomorphic to $\Cb$.
Suppose that for every
$n\in\Nb$, finite set $\Omega\subseteq A$, $\eps > 0$, and nonzero positive element $a\in A$
there exists an order-zero complete positive contractive linear map
$\varphi : M_n \to A$ such that
\begin{enumerate}
\item $1-\varphi (1) \precsim a$,

\item $\| [b,\varphi (z)] \| < \eps$ for all $b\in\Omega$ and norm-one $z\in M_n$.
\end{enumerate}
Then $A$ is $\cZ$-stable.
\end{theorem}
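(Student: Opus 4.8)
The plan is to establish $\cZ$-stability by exhibiting the relevant matrix structure inside the (norm) central sequence algebra. Fix a free ultrafilter $\omega$ on $\Nb$, put $A_\omega = \ell^\infty(\Nb, A)/\{(a_k) : \lim_\omega \|a_k\| = 0\}$, and let $F(A) = A_\omega \cap A'$ be the central sequence algebra, with $A$ embedded in $A_\omega$ as constant sequences. Since $\cZ$ is strongly self-absorbing and $A$ is separable and unital, $A$ is $\cZ$-stable if and only if there is a unital $^*$-homomorphism $\cZ \to F(A)$ (Toms--Winter). Writing $\cZ$ as an inductive limit $\varinjlim_k Z_{m_k, m_k+1}$ of prime dimension-drop algebras, where $Z_{m, m+1} = \{f \in C([0,1], M_m \otimes M_{m+1}) : f(0) \in M_m \otimes \Cb 1,\ f(1) \in \Cb 1 \otimes M_{m+1}\}$, along suitable unital connecting homomorphisms, and invoking a standard reindexing argument in $F(A)$ (valid since $A$ is separable), it suffices to produce, for each $n \in \Nb$, a unital $^*$-homomorphism $Z_{n, n+1} \to F(A)$. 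Unwinding the generators and relations, such a homomorphism is the same data as a pair of mutually commuting c.p.c.\ order-zero maps $\Phi : M_n \to F(A)$ and $\Psi : M_{n+1} \to F(A)$ with $\Phi(1) + \Psi(1) = 1$; here $\Phi$ and $\Psi$ are the images of the order-zero maps $a \mapsto (1-t)(a \otimes 1)$ and $b \mapsto t(1 \otimes b)$ that generate $Z_{n,n+1}$.

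The first half of the construction produces $\Phi$. Fix $n$ and choose finite sets $\Omega_1 \subseteq \Omega_2 \subseteq \cdots$ with dense union in $A$, scalars $\eps_k \downarrow 0$, and suitably small nonzero positive elements $a_k \in A$; let $\varphi_k : M_n \to A$ be the order-zero c.p.c.\ map provided by the hypothesis for the data $(n, \Omega_k, \eps_k, a_k)$. Then $\Phi := [(\varphi_k)_k] : M_n \to A_\omega$ is again c.p.c.\ and order zero (these properties pass to the quotient), and condition (2) forces its range to commute with the constant sequences, so in fact $\Phi$ maps into $F(A)$, with $1_{F(A)} - \Phi(1) = [(1_A - \varphi_k(1))_k]$. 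From condition (1) one extracts that $e := 1_{F(A)} - \Phi(1)$ is Cuntz-subordinate in $F(A)$ to each diagonal ``matrix unit'' $\Phi(e_{jj})$: this is where the test elements $a_k$ must be chosen with care (via a second application of the hypothesis, or by arguing through the traces on $F(A)$ in the stably finite case, the purely infinite case being standard), and where the simplicity of $A$ and the assumption $A \not\cong \Cb$ enter, guaranteeing that the $\Phi(e_{jj})$ are non-negligible.

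The heart of the argument is then a rotation step turning $\Phi$ into the complementary pair $(\Phi, \Psi)$. Working inside the relative commutant $F(A) \cap \Phi(M_n)'$ and using the structure theorem $\varphi(\cdot) = h\pi(\cdot)$ for c.p.c.\ order-zero maps, one uses the Cuntz-subordinations $e \precsim \Phi(e_{jj})$ together with Cuntz's lemma (open supports, functional calculus) to carve out, inside the hereditary subalgebra generated by $e$, a system of $(n+1)^2$ matrix units scaled by $e$ whose span commutes with $\Phi(M_n)$; the order-zero map $\Psi : M_{n+1} \to F(A)$ they determine then satisfies $\Psi(1) = e = 1 - \Phi(1)$ by construction. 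Carrying the comparison $e \precsim \Phi(e_{jj})$ out in a $\Phi(M_n)$-equivariant fashion is what secures the commutation of the ranges. This is a rotation/absorption argument of the kind pioneered by Jiang--Su and refined by R{\o}rdam--Winter, here performed in the non-simple algebra $F(A)$, and it is at this stage that nuclearity of $A$ is genuinely used (as in Matui--Sato, via property (SI) and the surjectivity of $F(A)$ onto the central sequence algebra of the finite von Neumann completions) to make the required comparisons uniformly available. A concluding reindexing then knits the homomorphisms $Z_{n,n+1} \to F(A)$ into a single unital homomorphism $\cZ \to F(A)$ and removes the ultrafilter, yielding $\cZ$-stability of $A$.

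I expect the rotation step to be the principal obstacle: upgrading the bare Cuntz-domination $e \precsim \Phi(e_{jj})$ to genuine, \emph{approximately central} matrix units demands careful functional-calculus bookkeeping on order-zero maps, repeated invocations of Cuntz's lemma, and control of the interplay between comparison in $F(A)$ and the approximate-centrality constraint. By contrast, the passage to $F(A)$, the reduction to unital homomorphisms from dimension-drop algebras, and the assembly of the $\varphi_k$ into $\Phi$ are essentially formal once the right reformulations are set up.
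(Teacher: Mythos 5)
There is nothing in the paper to compare your argument against: Theorem~\ref{T-Z-stable} is not proved here but quoted verbatim from Hirshberg--Orovitz \cite{HirOro13} (it is their characterization of $\cZ$-stability via ``tracial $\cZ$-absorption''), so what you have written is an attempted reproof of a cited result. Your overall frame is indeed the one used there and in Matui--Sato \cite{MatSat12} --- pass to $F(A)=A_\omega\cap A'$, reduce to unital embeddings of prime dimension drop algebras, and assemble the hypothesis maps $\varphi_k$ into a c.p.c.\ order-zero $\Phi\colon M_n\to F(A)$ --- but the sketch has a genuine gap exactly at the crux. Condition (i) gives $1-\varphi_k(1)\precsim a_k$ only \emph{in $A$}, with no control on the implementing elements relative to $A$; what is needed is the comparison $1-\Phi(1)\precsim\Phi(e_{11})$ \emph{in $F(A)$}. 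Your proposed routes --- ``a second application of the hypothesis'' or ``arguing through the traces on $F(A)$'' --- do not deliver this: Cuntz comparison in the central sequence algebra is not trace-determined (that would be strict comparison for $F(A)$, which is not available), and this is precisely why Hirshberg--Orovitz first prove that the hypothesis implies strict comparison for $A$ and then run the Matui--Sato property (SI) machinery (this is where nuclearity and simplicity enter) to convert tracial smallness of $1-\Phi(1)$ into Cuntz subequivalence below $\Phi(e_{11})$ inside the relative commutant. You invoke (SI) only later and as a black box ``making comparisons uniformly available,'' without the strict-comparison input it requires; as written, the key step is missing rather than merely technical.

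A secondary, fixable point: your ``rotation step'' --- extracting $(n+1)^2$ matrix units from the hereditary subalgebra of $e=1-\Phi(1)$ that commute with $\Phi(M_n)$, so as to build $\Psi\colon M_{n+1}\to F(A)$ with $\Phi(1)+\Psi(1)=1$ --- is both underspecified (it does not follow from the bare subequivalences $e\precsim\Phi(e_{jj})$) and unnecessary. Once $1-\Phi(1)\precsim\Phi(e_{11})$ holds in $F(A)$, the R{\o}rdam--Winter criterion \cite{RorWin10} (a unital $^*$-homomorphism $Z_{n,n+1}\to B$ exists if and only if there is a c.p.c.\ order-zero map $\varphi\colon M_n\to B$ with $1-\varphi(1)\precsim\varphi(e_{11})$) produces the dimension-drop embedding directly, and the standard reindexing then gives a unital copy of $\cZ$ in $F(A)$. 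So the correct shape of the proof is: (a) hypothesis $\Rightarrow$ strict comparison for $A$; (b) nuclearity $+$ strict comparison $\Rightarrow$ property (SI); (c) hypothesis $+$ (SI) $\Rightarrow$ $1-\Phi(1)\precsim\Phi(e_{11})$ in $F(A)$; (d) cite \cite{RorWin10} and conclude. Steps (a)--(c) are the substance, and they are absent from your proposal.
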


The following is the Ornstein--Weiss quasitiling theorem \cite{OrnWei87} as formulated
in Theorem~3.36 of \cite{KerLi17}. For finite sets $A,F\subseteq G$ we write
\[
\partial_F A = \{ s\in A : Fs \cap A\neq\emptyset\text{ and } Fs \cap (G\setminus A) \neq\emptyset \} .
\]
For $\lambda \leq 1$, a collection $\cC$ of finite subsets of $G$
is said to {\it $\lambda$-cover} a finite subset $A$ of $G$ if
$|A\cap \bigcup \cC | \geq \lambda |A|$. For $\beta \geq 0$, a
collection $\cC$ of finite subsets of $G$ is said to be {\it $\beta$-disjoint}
if for each $C\in\cC$ there is a set $C' \subseteq C$ with $|C' | \geq (1-\beta )|C|$
so that the sets $C'$ for $C\in\cC$ are pairwise disjoint.

\begin{theorem}\label{T-qt}
Let $0<\beta <\frac12$ and let $n$ be a positive integer such that
$(1-\beta /2)^n < \beta$. Then
whenever $e\in T_1 \subseteq T_2 \subseteq \dots \subseteq T_n$
are finite subsets of a group $G$ such that $|\partial_{T_{i-1}} T_i | \leq (\eta /8)|T_i |$ for
$i=2, \dots , n$, for every $(T_n ,\beta /4)$-invariant nonempty finite set $E\subseteq G$
there exist $C_1 , \dots , C_n \subseteq G$ such that
\begin{enumerate}
\item $\bigcup_{i=1}^n T_i C_i \subseteq E$, and

\item the collection of right translates $\bigcup_{i=1}^n \{ T_i c : c\in C_i \}$ is
$\beta$-disjoint and $(1-\beta )$-covers $E$.
\end{enumerate}
\end{theorem}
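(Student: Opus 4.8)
The plan is to run the Ornstein--Weiss greedy exhaustion, building the collections $C_n, C_{n-1}, \dots, C_1$ in order of decreasing index (largest shapes first) and then to check the two desired conclusions against the three hypotheses: the geometric budget $(1-\beta/2)^n < \beta$, the nested boundary bounds $|\partial_{T_{i-1}} T_i| \le (\eta/8)|T_i|$, and the $(T_n,\beta/4)$-invariance of $E$. Concretely, I would put $D_n = E$; having chosen $C_n, \dots, C_{i+1}$ and set $D_i = E \setminus \bigcup_{j>i} T_j C_j$, I would let $C_i$ be a \emph{maximal} subset of $\{c \in G : T_i c \subseteq E \text{ and } |T_i c \cap D_i| \ge (1-\beta/2)|T_i|\}$ subject to the translates $\{T_i c : c \in C_i\}$ being pairwise disjoint, and then set $D_{i-1} = D_i \setminus \bigcup_{c \in C_i} T_i c$. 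Allowing a shape to protrude into the already-covered region by up to a $(\beta/2)$-fraction is the crucial piece of slack: the leftover of an honest maximal packing is typically too thin to admit any further shape entirely inside it, so one must permit this overlap.

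The containment $\bigcup_i T_i C_i \subseteq E$ is then immediate from the admissibility condition, giving conclusion (1). For the $\beta$-disjointness in (2) I would assign to each tile $T_i c$ with $c \in C_i$ the core $T_i c \cap D_i$, which has size at least $(1-\beta/2)|T_i| \ge (1-\beta)|T_i|$; two cores at the same level are disjoint because their tiles are, a level-$i$ core lies in $D_i$ which is disjoint from every tile of level $>i$ (removed in forming $D_i$), and it is disjoint from every tile of level $<i$ since those lie in $D_{i-1} \subseteq D_i \setminus \bigcup_{c \in C_i} T_i c$. Hence all cores over all levels are pairwise disjoint, and the collection is $\beta$-disjoint.

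The remaining task, which is the heart of the argument, is to show $|D_0| < \beta|E|$. I would establish the dichotomy: for each $i$, \emph{either} $|D_i| \le \beta|E|$, \emph{or} $|D_{i-1}| \le (1-\beta/2)|D_i|$. Since $D_n = E$ and the $D_i$ decrease, iterating gives $|D_0| \le \max\{\beta|E|,\,(1-\beta/2)^n|E|\} < \beta|E|$ by the hypothesis on $n$. To prove the dichotomy when $|D_i| > \beta|E|$, write $V_i = \bigcup_{c \in C_i} T_i c$ and consider $x \in D_i \setminus V_i$ for which every $c \in T_i^{-1}x$ satisfies $T_i c \subseteq E$: for each such $c$, either $c$ is admissible, in which case maximality of $C_i$ forces $T_i c \cap V_i \ne \emptyset$, or $c$ is not admissible, in which case $|T_i c \cap (E \setminus D_i)| > (\beta/2)|T_i|$; either way $T_i c$ meets $V_i \cup (E \setminus D_i)$, so $x$ lies in the $T_i T_i^{-1}$-thickening of $V_i \cup (E \setminus D_i)$. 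Thus $D_i \setminus V_i$, minus the set of $x$ failing the interiority condition, is contained in that thickening minus $V_i \cup (E\setminus D_i)$ itself. The interiority-failure set is $o(|E|)$ by the $(T_n,\beta/4)$-invariance of $E$ and the standard consequence that $E$ is then approximately invariant under $T_i T_i^{-1} \subseteq T_n T_n^{-1}$; the thickening error from $E \setminus D_i = \bigcup_{j>i} T_j C_j$ is controlled because $\partial_{T_i} T_j \subseteq \partial_{T_{j-1}} T_j$ (using $T_i \subseteq T_{j-1}$ and $e \in T_i$), whence $|\partial_{T_i} T_j| \le (\eta/8)|T_j|$ and the already-covered region is $T_i$-almost-invariant with total slack $O(\eta)|E|$; and the thickening error from $V_i$ is likewise bounded against $|V_i|$ via the boundary bounds on the shapes composing it. Comparing $|V_i \cap D_i| \ge (1-\beta/2)|V_i|$ with these estimates forces $|V_i \cap D_i| \ge (\beta/2)|D_i|$, i.e.\ $|D_{i-1}| \le (1-\beta/2)|D_i|$.

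The main obstacle is exactly this covering dichotomy, and within it the bookkeeping of error terms: one must carry out the estimate above so that the interiority slack $\tfrac{\beta}{4}|E|$ coming from $E$, the cumulative boundary slack $O(\eta)|E|$ coming from the nested shapes, and the geometric factor $(1-\beta/2)^n < \beta$ accumulated over the $n$ levels all fit together with the prescribed constants $\beta/2$ and $\eta/8$, leaving a genuine $(1-\beta)$-cover at the end. In particular the thickening errors attached to $V_i$ and to $E \setminus D_i$ must each be negligible relative to $|D_i|$ precisely in the regime $|D_i| > \beta|E|$, which is what the hypotheses on the $T_i$ and on $E$ are calibrated to guarantee; everything else is routine.
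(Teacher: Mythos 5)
The paper itself does not prove Theorem~\ref{T-qt}: it quotes the Ornstein--Weiss quasitiling theorem as formulated in Kerr--Li, so your attempt has to be measured against the standard argument, whose dynamical analogue is carried out in Section~\ref{S-tilings} (Lemmas~\ref{lem:starinv}, \ref{lem:pack}, \ref{lem:bepack}, \ref{lem:bdense}). Your construction, conclusion (1), and the $\beta$-disjointness bookkeeping via the cores $T_ic\cap D_i$ are fine, and the dichotomy-plus-geometric-decay skeleton is the right one. The gap is in the covering dichotomy, exactly where you place ``the heart of the argument'': the thickening estimates you invoke do not follow from the hypotheses. Nothing bounds $|T_i^{-1}V_i\setminus V_i|$, since $V_i$ is a union of translates of $T_i$ itself and no hypothesis makes $T_i$ almost invariant under $T_i$ (typically $\partial_{T_i}T_i$ is most of $T_i$). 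More seriously, the hypothesis $|\partial_{T_{j-1}}T_j|\le(\beta/8)|T_j|$ controls the \emph{inner} boundary of $T_j$ (points of $T_j$ whose $T_{j-1}$-translate exits $T_j$), not the \emph{outer} thickening $T_i^{-1}(T_jc)\setminus T_jc$ that your containment $D_i\setminus V_i\subseteq T_iT_i^{-1}W\setminus W$ requires; what one can actually prove is $|T_i^{-1}T_j\setminus T_j|\le|T_i|\,|\partial_{T_{j-1}}T_j|$, and the extra factor $|T_i|$ is fatal because your error terms must be small relative to $|D_i|$, which may be only about $\beta|E|$. (Likewise, $(T_n,\beta/4)$-invariance of $E$ does not yield approximate invariance under $T_nT_n^{-1}$ without a loss of order $|T_n|^2$; this particular point is repairable by using only the candidate tile $T_ix$ based at $x$ itself, but the other two are not.)

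There is also a structural reason the containment route cannot be patched within your construction: because you insist that the stage-$i$ tiles be pairwise disjoint, maximality only tells you that a blocked admissible candidate \emph{meets} $V_i$ in at least one point, and any estimate built on that inevitably loses a factor $|T_i|$. The standard Ornstein--Weiss/Downarowicz--Huczek--Zhang mechanism, mirrored in this paper, is arranged precisely to avoid this: within a stage one works with $\beta$-disjointness (or, as in Lemma~\ref{lem:bepack}, adds the uncovered part of any $T_i$-window that is less than $\epsilon$-covered, so that afterwards \emph{every} window is at least $\epsilon$-covered), so that each blocked candidate contributes a definite $\epsilon|T_i|$-sized intersection; the per-stage density increment is then obtained by a double-counting argument as in Lemma~\ref{lem:pack}, and the almost-invariance of the already-covered region under $T_i^{-1}$ is formulated and proved as a counting statement (Lemma~\ref{lem:starinv}, using that the covered region is an $\epsilon$-disjoint union of shapes invariant under the later shapes' inverses) rather than as a bound on a set-theoretic thickening. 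Until your dichotomy is established by a counting mechanism of this kind, the proposal is incomplete at its central step.
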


\begin{theorem}\label{T-Z-stability}
Let $G$ be a countably infinite amenable group and let
$G\curvearrowright X$ be a free minimal action on the Cantor set
such that for every finite set $F\subseteq G$ and $\delta > 0$ there is a
clopen tower decomposition of $X$ with $(F,\delta )$-invariant shapes. Then $C(X)\rtimes G$ is $\cZ$-stable.
\end{theorem}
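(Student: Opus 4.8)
The plan is to verify the Hirshberg--Orovitz criterion of Theorem~\ref{T-Z-stable} for $A = C(X)\rtimes G$. This algebra is simple (the action is minimal and free, hence topologically free, on an infinite space), separable, unital, and nuclear (since $G$ is amenable), and it is not isomorphic to $\Cb$. So fix $n\in\Nb$, a finite set $\Omega\subseteq C(X)\rtimes G$, $\eps>0$, and a nonzero positive element $a\in C(X)\rtimes G$; I must produce an order-zero c.p.c.\ map $\varphi\from M_n\to C(X)\rtimes G$ with $1-\varphi(1)\precsim a$ and $\|[b,\varphi(z)]\|<\eps$ for all $b\in\Omega$ and norm-one $z\in M_n$. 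A standard reduction lets me assume $\Omega$ is a finite subset of the dense $*$-subalgebra $C(X)\rtimes_{\alg} G$, so each $b\in\Omega$ is a finite sum $\sum_{s} f_s u_s$ with $f_s\in C(X)$; enlarging $\Omega$, I may take it to be $\{u_s : s\in F_0\}\cup\{f : f\in\Lambda\}$ for a finite symmetric $F_0\subseteq G$ containing $e$ and a finite $\Lambda\subseteq C(X)$. The commutator estimate then reduces to controlling $\|[u_s,\varphi(z)]\|$ for $s\in F_0$ and (by a continuity/clopen-approximation argument) to making $\varphi(z)$ constant on small clopen sets.

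The construction of $\varphi$ will come from a clopen tower decomposition of $X$ with a sufficiently invariant shape, combined with the Ornstein--Weiss quasitiling theorem~\ref{T-qt} to subdivide each shape into $n$-divisible blocks. More precisely: choose $\beta>0$ small (depending on $n,\eps$) and apply Theorem~\ref{T-qt} to obtain a chain $e\in T_1\subseteq\cdots\subseteq T_k$ of finite sets with the boundary-smallness property; then, using the hypothesis of the theorem, pick a clopen tower decomposition $\{(B_i,S_i)\}$ of $X$ whose shapes $S_i$ are $(T_k,\beta/4)$-invariant (and also $(F_0,\delta')$-invariant for a small $\delta'$ controlling the commutator with $u_s$). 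Quasitiling each shape $S_i$ by right translates of the $T_j$'s, and then partitioning each tile $T_j c$ further into $\lfloor |T_j|/n\rfloor$ disjoint $n$-element ``columns'' plus a small remainder, I obtain inside $X$ a large clopen collection of disjoint ``$n$-towers'' --- sets of the form $\{g_1 D,\dots,g_n D\}$ with $D$ clopen and the $g_r$'s in $G$. For each such $n$-tower define partial isometries implementing the cyclic shift on its levels; these assemble (as in the standard Rokhlin-type construction) into an order-zero c.p.c.\ map $\varphi\from M_n\to C(X)\rtimes G$ whose image is supported on the union of the $n$-towers. The matrix units $\varphi(e_{rr})$ are sums of indicator functions of the levels, and $\varphi(e_{rs})$ for $r\ne s$ are the corresponding partial isometries in the crossed product.

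Two things must be checked. For the almost-central estimate: because the $n$-towers are built from translates of the very invariant sets $T_j$, for each $s\in F_0$ all but a $\beta$-or-$\delta'$-fraction of each tower's levels are moved by $s$ to another level of the same tower, so $u_s$ approximately commutes with the $\varphi(e_{rs})$; quantitatively $\|[u_s,\varphi(z)]\|$ is bounded by a constant (depending on $n$) times $\beta+\delta'$, which I make $<\eps$ by the initial choice of $\beta,\delta'$. Commutation with elements of $\Lambda\subseteq C(X)$ is handled by refining the tower bases to be small enough that each $f\in\Lambda$ is nearly constant on each level. For the smallness of $1-\varphi(1)$: the complement of the union of the $n$-towers consists of the quasitiling leftovers (measure $<\beta$ in each shape, by the $(1-\beta)$-covering clause of Theorem~\ref{T-qt}) together with the ``mod $n$'' remainders in each tile (a fraction $<n/\min_j|T_j|$, which I make tiny by taking the $T_j$ large); so $1-\varphi(1)$ is the indicator function of a clopen set $E$ that meets every tower shape in a small proportion, hence has small ``density''. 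The point $1-\varphi(1)\precsim a$ then follows from a Cuntz-subequivalence lemma in the crossed product: in a minimal free action, a clopen set of sufficiently small density is Cuntz-below any fixed nonzero positive $a$ --- one covers a copy of $E$ by finitely many $G$-translates of a small clopen set sitting under the support of $a$, using minimality to get the covering and freeness/disjointness plus the tower structure to keep track of how many translates are needed. I expect this last subequivalence step --- making the ``small clopen set is Cuntz-small'' argument precise and linking it back to the density bound coming from the quasitiling --- to be the main obstacle; the almost-centrality, by contrast, is a direct consequence of the strong invariance built into $T_1\subseteq\cdots\subseteq T_k$ via Theorem~\ref{T-qt}.
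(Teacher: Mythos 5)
There is a genuine gap, and it sits exactly where you declare the argument to be easy. Your map $\varphi$ is built from sharp indicator functions and partial isometries on the levels of the $n$-towers, and you argue almost-centrality with respect to $u_s$ by saying that all but a small fraction of the levels are carried by $s$ to other levels of the same tower. But that only bounds the \emph{proportion} (equivalently, a trace norm) of the mismatch, not the C$^*$-norm: if even one boundary level is unmatched, $u_s\varphi(e_{ij})u_s^*-\varphi(e_{ij})$ contains a partial isometry of norm $1$ supported there, so $\|[u_s,\varphi(z)]\|\approx 1$ no matter how invariant the shapes $T_j$ and $S_i$ are. This is precisely why the paper does not take $\varphi$ to be a Rokhlin-type $^*$-homomorphism: it introduces the parameter $Q>n^2/\eps$, partitions each tile into layers $B_{k,l,c,0},\dots,B_{k,l,c,Q}$ (interiors with respect to powers of $F$), and multiplies $\psi$ by the ``staircase'' function $h$ taking the value $q/Q$ on the $q$-th layer; since $s\in F$ moves each layer only into adjacent layers, one gets the uniform norm bound $\|u_s h_{k,l,c,i,j}u_s^{-1}-h_{k,l,c,i,j}\|\le 1/Q<\eps/n^2$. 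Some such tapering (or Berg-type) device is indispensable, and it also changes the final step: $1-\varphi(1)$ is then supported on the complement of the top layers $B_{k,l,c,Q}$, which is what the paper's counting for the Cuntz subequivalence is calibrated to.

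A second, related problem is your mechanism for $n$-divisibility. Splitting a single tile $T_jc$ into $n$-element ``columns'' and using the cyclic shift forces $\varphi(e_{ij})$ to contain terms $u_g\unit_B$ where $g$ maps one level of a column to another level of the same tile; since $\|[f,u_g\unit_B]\|=\sup_{x\in B}|f(gx)-f(x)|$, you need $f$ to take nearly equal values on the two \emph{matched} levels, which may be far apart in $X$. Making each individual level small (your refinement of the bases) does not achieve this. The paper avoids the issue by matching whole tiles to whole tiles: the centers $C_{k,l,m}$ with base levels inside a common small-diameter set $A_m$ are split into $n$ classes and paired by the bijections $\Lambda_{k,i}$, so a level $tcV_k$ is matched with $t\Lambda_{k,i,j}(c)V_k$ and the two points $tcx$, $t\Lambda_{k,i,j}(c)x$ stay within $\eta$ by uniform continuity; that is what yields $\|[f,\varphi(e_{ij})]\|<\eps/n^2$. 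Finally, the step you flag as the main obstacle --- $1-\varphi(1)\precsim a$ --- is in fact the more routine part: after replacing $a$ by $\unit_A$ (Phillips) and choosing $D$ with $D^{-1}A=X$, one makes the shapes $S_k$ so invariant that at least $|S_k|/(2|D|^2)$ levels of each tower lie inside $A$, and an explicit injection of the leftover levels into these, implemented by a single element $z$ of the crossed product, gives $v$ with $v\unit_Av^*=1-\varphi(1)$. So the outline has the right skeleton (Hirshberg--Orovitz plus quasitiled clopen towers), but as written both commutator estimates fail, and repairing them requires the staircase function and the tile-to-tile matching that constitute the core of the paper's proof.
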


\begin{proof}
Let $n\in\Nb$. Let $\Upsilon$ be a finite subset of the unit ball of $C(X)$,
$F$ a symmetric finite subset of $G$ containing the identity element $e$,
and $\eps > 0$. Let $a$ be a nonzero positive element of $C(X)\rtimes G$.
We will show the existence of an order-zero completely positive contractive
linear map $\varphi : M_n \to C(X)\rtimes G$ satisfying (i) and (ii) in Theorem~\ref{T-Z-stable}
where the finite set $\Omega$ there is taken to be $\Upsilon\cup \{ u_s : s\in F \}$.
Since $C(X)\rtimes G$ is generated as a C$^*$-algebra by the unit ball of $C(X)$
and the unitaries $u_s$ for $s\in G$, we will thereafter be able to conclude by Theorem~\ref{T-Z-stable}
that $C(X)\rtimes G$ is $\cZ$-stable.

By Lemma~7.9 in \cite{Phi14} we may assume that $a$ is a function in $C(X)$.
Taking a clopen set $A\subseteq X$
on which $a$ is nonzero, we may furthermore assume that $a$ is equal to
the indicator function $\unit_A$.
Minimality implies that the clopen sets $sA$ for $s\in G$ cover $X$,
and so by compactness there is a finite set $D\subseteq G$ such that $D^{-1} A = X$.

Equip $X$ with a compatible metric $d$.
Choose an integer $Q > n^2 / \eps$.

Let $\gamma > 0$, to be determined.
Take a $0 < \beta < 1/n$ which is small enough so that if $T$ is a nonempty finite subset of $G$
which is sufficiently invariant under left translation by $F^Q$ and $T'$ is a subset
of $T$ with $|T' | \geq (1-n\beta )|T|$ then
$|\bigcap_{s\in F^Q} s^{-1} T' | \geq (1-\gamma )|T|$.

Choose an $L\in\Nb$ large enough so that $(1-\beta /2)^L < \beta$.
By amenability there exist finite subsets $e\in T_1 \subseteq T_2 \subseteq\dots\subseteq T_L$ of $G$
such that $|\partial_{T_{l-1}} T_l | \leq (\beta /8)|T_l |$ for $l=2,\dots , L$.
By the previous paragraph, we may also assume that for each $l$ the set $T_l$ is
sufficiently invariant under left translation by $F^Q$ so that
for all $T \subseteq T_l$ satisfying $|T| \geq (1-n\beta )|T_l |$ one has
\begin{align}\label{E-approx invariant}
\bigg|\bigcap_{s\in F^Q} s^{-1} T \bigg| \geq (1-\gamma )|T_l | .
\end{align}

By uniform continuity there is a $\eta > 0$ such that
$|f(x) - f(y)| < \eps /(3n^2 )$ for all $f\in\Upsilon\cup\Upsilon^2$ and all $x,y\in X$
satisfying $d(x,y) < \eta$.
Again by uniform continuity there is an $\eta' > 0$ such that $d(tx,ty) < \eta$ for all
$x,y\in X$ satisfying $d(x,y)<\eta'$ and all $t\in\bigcup_{l=1}^L T_l$.
Fix a clopen partition $\{ A_1 , \dots , A_M \}$ of $X$ whose members all have diameter
less that $\eta'$.

Let $E$ be a finite subset of $G$ containing $T_L$ and let $\delta > 0$ be such that $\delta \leq \beta /4$.
We will further specify $E$ and $\delta$ below.
By hypothesis there is a collection $\{ (V_k , S_k ) \}_{k=1}^K$
of clopen towers such that the shapes $S_1 , \dots , S_K$ are $(E ,\delta )$-invariant
and the sets $S_1 V_1 , \dots , S_K V_K$ partition $X$.
We may assume that for each $k=1,\dots ,K$ the set $S_k$ is large enough so that
\begin{align}\label{E-n}
Mn \bigg( \sum_{l=1}^L |T_l| \bigg) \leq \beta |S_k | .
\end{align}
By a simple procedure we can construct, for each $k$, a clopen partition
$\sP_k$ of $V_k$ such that each level of every one of the towers $(V, S_k )$ for $V\in\sP_k$ is
contained in one of the sets $A_1 , \dots , A_M$ as well as
in one of the sets $A$ and $X\setminus A$. By replacing $(V_k ,S_k )$ with these thinner towers
for each $k$, we may therefore assume that each level in every one of
the towers $(V_1 , S_1 ),\dots ,(V_K , S_K )$ is contained in one of the sets $A_1 , \dots , A_M$
and in one of the sets $A$ and $X\setminus A$.

Let $1\leq k\leq K$. Since $S_k$ is $(T_L ,\beta /4)$-invariant,
by Theorem~\ref{T-qt} and our choice of the sets $T_1 , \dots , T_L$ we can find
$C_{k,1} ,\dots , C_{k,L} \subseteq S_k$ such that the collection
$\{ T_l c : l=1,\dots ,L, \, c\in C_{k,l} \}$ is $\beta$-disjoint, has union contained in $S_k$,
and $(1-\beta )$-covers $S_k$. By $\beta$-disjointness,
for every $l=1,\dots ,L$ and $c\in C_{k,l}$ we can find a $T_{k,l,c} \subseteq T_l$
satisfying $|T_{k,l,c} | \geq (1-\beta )|T_l |$
so that the collection of sets $T_{k,l,c} c$
for $l=1,\dots ,L$ and $c\in C_{k,l}$ is disjoint and has the same union
as the sets $T_l c$ for $l=1,\dots ,L$ and $c\in C_{k,l}$, so that it
$(1-\beta )$-covers $S_k$.

For each $l=1,\dots ,L$ and $m=1,\dots , M$ write $C_{k,l,m}$ for the set of all $c\in C_{k,l}$
such that $cV_k \subseteq A_m$, and choose pairwise disjoint subsets
$C_{k,l,m}^{(1)} , \dots , C_{k,l,m}^{(n)}$ of $C_{k,l,m}$ such that each has cardinality
$\lfloor |C_{k,l,m} | /n \rfloor$.
For each $i=2,\dots , n$ choose a bijection
\begin{align*}
\Lambda_{k,i} : \bigsqcup_{l,m} C_{k,l,m}^{(1)} \to \bigsqcup_{l,m} C_{k,l,m}^{(i)}
\end{align*}
which sends $C_{k,l,m}^{(1)}$ to $C_{k,l,m}^{(i)}$ for all $l,m$.
Also, define $\Lambda_{k,1}$ to be the identity map from $\bigsqcup_{l,m} C_{k,l,m}^{(1)}$ to itself.

Let $1\leq l\leq L$ and $c\in \bigsqcup_m C_{k,l,m}^{(1)}$. Define the set
$T_{k,l,c}' = \bigcap_{i=1}^n T_{k,l,\Lambda_{k,i} (c)}$, which satisfies
\begin{align}\label{E-n beta}
|T_{k,l,c}' | \geq (1-n\beta )|T_l | \geq (1-n\beta )|T_{k,l,c} |
\end{align}
since each $T_{k,l,\Lambda_{k,i} (c)}$ is a subset of $T_l$ of cardinality at least $(1-\beta )|T_l |$.
Set
\begin{align*}
B_{k,l,c,Q} = \bigcap_{s\in F^Q} sT_{k,l,c}' ,\hspace*{5mm}
B_{k,l,c,0} = T_{k,l,c}' \setminus F^{Q-1} B_{k,l,c,Q},
\end{align*}
and, for $q=1,\dots, Q-1$, using the convention $F^0 = \{ e \}$,
\begin{align*}
B_{k,l,c,q} = F^{Q-q} B_{k,l,c,Q} \setminus F^{Q-q-1} B_{k,l,c,Q} .
\end{align*}
Then the sets $B_{k,l,c,0} , \dots , B_{k,l,c,Q}$ partition $T_{k,l,c}'$.
For $s\in F$ we have
\begin{gather}\label{E-B_Q}
sB_{k,l,c,Q} \subseteq B_{k,l,c,Q-1} \cup B_{k,l,c,Q} ,
\end{gather}
while for $q=1,\dots , Q-1$ we have
\begin{align}\label{E-B_q}
sB_{k,l,c,q} \subseteq B_{k,l,c,q-1} \cup B_{k,l,c,q} \cup B_{k,l,c,q+1} ,
\end{align}
for if we are given a $t\in B_{k,l,c,q}$ then $st\in F^{Q-q+1} B_{k,l,c,Q}$, while if
$st\in F^{Q-q-2} B_{k,l,c,Q}$ then $t\in F^{Q-q-1} B_{k,l,c,Q}$ since $F$ is symmetric,
contradicting the membership of $t$ in $B_{k,l,c,q}$.
Also, from (\ref{E-approx invariant}) and (\ref{E-n beta}) we get
\begin{align}\label{E-B approx invariant}
| B_{k,l,c,Q} | \geq (1-\gamma )|T_l | .
\end{align}
For $i=2,\dots ,n$, $c\in \bigsqcup_m C_{k,l,m}^{(i)}$, and $q=0,\dots , Q$ we set
$B_{k,l,c,q} = B_{k,l,\lambda_{k,i}^{-1} (c),q}$.

Write $\Lambda_{k,i,j}$ for the composition $\Lambda_{k,i} \circ\Lambda_{k,j}^{-1}$.
Define a linear map $\psi : M_n \to C(X)\rtimes G$ by declaring it on the standard
matrix units $\{ e_{ij} \}_{i,j=1}^n$ of $M_n$ to be given by
\begin{align*}
\psi (e_{ij} ) = \sum_{k,l,m}
\sum_{c\in C_{k,l,m}^{(j)}} \sum_{t\in T_{k,l,c}'}
u_{t\Lambda_{k,i,j} (c)c^{-1} t^{-1}} \unit_{tcV_k}
\end{align*}
and extending linearly. Then $\psi (e_{ij} )^* = \psi (e_{ji} )$ for all $i,j$ and
the product $\psi (e_{ij} )\psi (e_{i' j'} )$ is $1$ or $0$
depending on whether $i=i'$, so that $\psi$ is a $^*$-homomorphism.

For all $k$ and $l$, all $1\leq i,j\leq n$, and all $c\in \bigsqcup_m C_{k,l,m}^{(j)}$ we set
\begin{align*}
h_{k,l,c,i,j} = \sum_{q=1}^Q \sum_{t\in B_{k,l,c,q}}
\frac{q}{Q} u_{t\Lambda_{k,i,j} (c)c^{-1} t^{-1}} \unit_{tcV_k}
\end{align*}
and put
\begin{align*}
h = \sum_{k,l,m} \, \sum_{i=1}^n \sum_{c\in C_{k,l,m}^{(i)}} h_{k,l,c,i,i} .
\end{align*}
Then $h$ is a norm-one function which commutes with the
image of $\psi$, and so we can define an order-zero completely positive contractive linear map
$\varphi : M_n \to C(X)\rtimes G$ by setting
\begin{align*}
\varphi (z) = h\psi (z) .
\end{align*}
Note that
$\varphi (e_{ij} ) = \sum_{k,l,m} \sum_{c\in C_{k,l,m}^{(j)}} h_{k,l,c,i,j}$.

We now verify condition (ii) in Theorem~\ref{T-Z-stable} for the elements of the set
$\{ u_s : s\in F \}$. Let $1\leq i,j\leq n$.
For all $k$ and $l$, all $c\in \bigsqcup_m C_{k,l,m}^{(j)}$, and all $s\in F$ we have
\begin{align*}
u_s h_{k,l,c,i,j} u_s^{-1} - h_{k,l,c,i,j}
&= \sum_{q=1}^Q \sum_{t\in B_{k,l,c,q}}
\frac{q}{Q} u_{st\Lambda_{k,i,j} (c)c^{-1} (st)^{-1}} \unit_{stcV_k} \\
&\hspace*{20mm} \ - \sum_{q=1}^Q \sum_{t\in B_{k,l,c,q}}
\frac{q}{Q} u_{t\Lambda_{k,i,j} (c)c^{-1} t^{-1}} \unit_{tcV_k} ,
\end{align*}
and so in view of (\ref{E-B_Q}) and (\ref{E-B_q}) we obtain
\begin{align*}
\| u_s h_{k,l,c,i,j} u_s^{-1} - h_{k,l,c,i,j} \| \leq \frac{1}{Q} < \frac{\eps}{n^2} .
\end{align*}
Since each of the elements $b = u_s h_{k,l,c,i,j} u_s^{-1} - h_{k,l,c,i,j}$ is such that
$b^* b$ and $bb^*$ are dominated by twice the indicator functions of
$T_{k,l,\Lambda_j^{-1} (c)}'cV_k$ and
$T_{k,l,\Lambda_j^{-1}(c)}' \Lambda_{k,i,j} (c)V_k$, respectively,
and the sets $T_{k,l,\Lambda_{i'}^{-1}(c)}'cV_k$ over all $k,l$, all $i'=1,\dots ,n$, and all
$c\in \bigsqcup_m C_{k,l,m}^{(i')}$
are pairwise disjoint, this yields
\begin{align*}
\| u_s \varphi (e_{ij} ) u_s^{-1} - \varphi (e_{ij} ) \|
= \max_{k,l,m} \,\max_{c\in C_{k,l,m}^{(j)}} \| u_s h_{k,l,c,i,j} u_s^{-1} - h_{k,l,c,i,j} \| < \frac{\eps}{n^2}
\end{align*}
and hence, for every norm-one element $z = (z_{ij} ) \in M_n$,
\begin{align*}
\| [ u_s ,\varphi (z)] \|
= \| u_s \varphi (z) u_s^{-1} - \varphi (z) \|
&\leq \sum_{i,j=1}^n |z_{ij} | \| u_s \varphi (e_{ij} ) u_s^{-1} - \varphi (e_{ij} ) \| \\
&< n^2 \cdot \frac{\eps}{n^2} = \eps .
\end{align*}

Next we verify condition (ii) in Theorem~\ref{T-Z-stable} for the functions in $\Upsilon$.
Let $1\leq i,j\leq n$.
Let $g\in\Upsilon\cup\Upsilon^2$. Let $1\leq k\leq K$, $1\leq l\leq L$, $1\leq m\leq M$,
and $c\in C_{k,l,m}^{(j)}$.
Then
\begin{align}\label{E-two}
h_{k,l,c,i,j}^* gh_{k,l,c,i,j}
&= \sum_{q=1}^Q \sum_{t\in B_{k,l,c,q}}
\frac{q^2}{Q^2} (tc\Lambda_{k,i,j} (c)^{-1} t^{-1} g) \unit_{tcV_k}
\end{align}
and
\begin{align}\label{E-one}
gh_{k,l,c,i,j}^* h_{k,l,c,i,j}
&= \sum_{q=1}^Q \sum_{t\in B_{k,l,c,q}}
\frac{q^2}{Q^2} g\unit_{tcV_k} .
\end{align}
Now let $x\in V_k$. Since $\Lambda_{k,i,j} (c)x$ and $cx$ both belong to $A_m$,
we have $d(\Lambda_{k,i,j} (c)x,cx) < \eta'$. It follows that for every $t\in T_l$ we have
$d(t\Lambda_{k,i,j} (c)x,tcx) < \eta$ by our choice of $\eta'$, so that
$|g(t\Lambda_{k,i,j} (c)x) - g(tcx)| < \eps /(3n^2 )$
by our choice of $\eta$, in which case
\begin{align*}
\| (tc\Lambda_{k,i,j} (c)^{-1} t^{-1} g - g)\unit_{tcV_k} \|
&= \| c^{-1} t^{-1} ((tc\Lambda_{k,i,j} (c)^{-1} t^{-1} g - g)\unit_{tcV_k}) \| \\
&= \| (\Lambda_{k,i,j} (c)^{-1} t^{-1} g - c^{-1} t^{-1} g)\unit_{V_k} \| \\
&= \sup_{x\in V_k} |g(t\Lambda_{k,i,j} (c)x) - g(tcx)| \\
&< \frac{\eps}{3n^2} .
\end{align*}
Using (\ref{E-two}) and (\ref{E-one}) this gives us
\begin{align}\label{E-max}
\lefteqn{\| h_{k,l,c,i,j}^* gh_{k,l,c,i,j} - gh_{k,l,c,i,j}^* h_{k,l,c,i,j} \|}\hspace*{15mm} \\
\hspace*{10mm} &= \max_{q=1,\dots ,Q} \,\max_{t\in B_{k,l,c,q}} \frac{q^2}{Q^2}
\| (tc\Lambda_{k,i,j} (c)^{-1} t^{-1} g - g)\unit_{tcV_k} \|
< \frac{\eps}{3n^2} .\notag
\end{align}
Set $w = \varphi (e_{ij} )$ for brevity. Let $f\in\Upsilon$.
For $g \in \{ f , f^2 \}$ the functions
$h_{k,l,c,i,j}^* gh_{k,l,c,i,j} - gh_{k,l,c,i,j}^* h_{k,l,c,i,j}$
over all $k$, $l$, and $m$ and all $c\in C_{k,l,m}^{(j)}$
have pairwise disjoint supports, so that (\ref{E-max}) yields
\begin{align*}
\| w^* gw - gw^* w \| < \frac{\eps}{3n^2} .
\end{align*}
It follows that
\begin{align*}
\| w^* f^2 w - fw^* fw \|
\leq \| w^* f^2 w - f^2 w^* w \| + \| f(fw^* w - w^* fw) \|
< \frac{2\eps}{3n^2}
\end{align*}
and so
\begin{align*}
\| fw - wf \|^2
&= \| (fw-wf)^* (fw - wf) \| \\
&= \| w^* f^2 w - fw^* fw + fw^* wf - w^* fwf \| \\
&\leq \| w^* f^2 w - fw^* fw \| + \| (fw^* w - w^* fw)f \| \\
&< \frac{2\eps}{3n^2} + \frac{\eps}{3n^2} = \frac{\eps}{n^2}.
\end{align*}
Therefore for every norm-one element $z = (z_{ij} ) \in M_n$ we have
\begin{align*}
\| [f,\varphi (z)] \|
\leq \sum_{i,j=1}^n |z_{ij} | \| [f,\varphi (e_{ij} )] \|
< n^2 \cdot \frac{\eps}{n^2} = \eps .
\end{align*}

Finally, we verify that the parameters in the construction of $\varphi$
can be chosen so that $1-\varphi (1) \precsim \unit_A$.
By taking the sets $S_1 , \dots , S_K$ to be
sufficiently left invariant (by enlarging $E$ and shrinking $\delta$ if necessary)
we may assume that for every $k=1,\dots ,K$ there is an $S_k' \subseteq S_k$
such that the set $\{ s\in S_k' : Ds\subseteq S_k \}$ has cardinality
at least $|S_k|/2$. Let $1\leq k\leq K$. Take a maximal set $S_k'' \subseteq S_k'$
such that the sets $Ds$ for $s\in S_k''$ are pairwise disjoint, and note that
$|S_k'' |\geq |S_k' |/|D^{-1} D| \geq |S_k |/(2|D|^2 )$.
Since $D^{-1} A = X$, each of the sets $DsV_k$ for $s\in S_k''$ intersects $A$,
and so the set $S_k^\sharp$ of all $s\in S_k$ such that $sV_k \subseteq A$
has cardinality at least $|S_k|/(2|D|^2 )$.
Define $S_{k,1} = \bigsqcup_{l,m} \bigsqcup_{i=1}^n \bigsqcup_{c\in C_{k,l,m}^{(i)}} B_{k,l,c,Q} c$,
which is the set of all $s\in S_k$ such that the function $\varphi (1)$ takes the value $1$ on $sV_k$.
Set $S_{k,0} = S_k \setminus S_{k,1}$.
Since $\sum_{i=1}^n |C_{k,l,m}^{(i)} | \geq |C_{k,l,m}| - n$ for every $l$ and $m$,
by (\ref{E-n}) we have
\begin{align*}
\sum_{l,m} \sum_{i=1}^n |T_l ||C_{k,l,m}^{(i)}|
&\geq \sum_{l,m} |T_l ||C_{k,l,m}| - Mn \sum_l |T_l | \\
&\geq \bigg| \bigcup_l T_l C_{k,l} \bigg| - \beta |S_k |\\
&\geq (1-2\beta ) |S_k | .
\end{align*}
Since for all $l$ and $i$ and all $c\in C_{k,l,m}^{(i)}$
we have $|B_{k,l,c,Q} | \geq (1-\gamma )|T_l|$ by (\ref{E-B approx invariant}),
it follows, putting $\lambda = (1-\gamma )(1-2\beta )$, that
\begin{align*}
|S_{k,1} |
\geq (1-\gamma ) \sum_{l,m} \sum_{i=1}^n |T_l ||C_{k,l,m}^{(i)}|\geq \lambda |S_k | .
\end{align*}
By taking $\gamma$ and $\beta$ small enough we can guarantee that
$1-\lambda \leq 1/(2|D|^2 )$ and hence
\begin{align*}
|S_{k,0} | = |S_k | - |S_{k,1} | \leq (1-\lambda )|S_k | \leq |S_k^\sharp | ,
\end{align*}
so that there exists an injection $\theta_k : S_{k,0} \to S_k^\sharp$.
Define
\[
z = \sum_{k=1}^K \sum_{s\in S_{k,0}} u_{\theta_k (s)s^{-1}} \unit_{sV_k} .
\]
A simple computation shows that $z^* \unit_A z$ is
the indicator function of $\bigsqcup_{k=1}^K S_{k,0} V_k$, which is the support of $1-\varphi (1)$,
and so putting $v = (1-\varphi (1))^{1/2} z^*$ we get
\begin{align*}
v\unit_A v^* = (1-\varphi (1))^{1/2}z^* \unit_A z (1-\varphi (1))^{1/2} = 1-\varphi (1) .
\end{align*}
This demonstrates that $1-\varphi (1) \precsim \unit_A$, as desired.
\end{proof}

Combining Theorems~\ref{T-Z-stable} and \ref{T-dense G delta} yields the following.

\begin{theorem}\label{T-comeager}
Let $G$ be a countably infinite amenable group and $X$ the Cantor set. Then the set of all actions in
$\frmin (G,X)$ whose crossed product is $\cZ$-stable is comeager, and in particular nonempty.
\end{theorem}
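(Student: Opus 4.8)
The plan is to deduce the statement immediately by combining Theorems~\ref{T-dense G delta} and \ref{T-Z-stability}. First I would invoke Theorem~\ref{T-dense G delta}: the collection $\cC$ of actions in $\frmin(G,X)$ having the property that for every finite $F\subseteq G$ and $\delta>0$ there is a clopen tower decomposition of $X$ with $(F,\delta)$-invariant shapes is a dense $G_\delta$ subset of $\frmin(G,X)$, hence in particular comeager. Next, Theorem~\ref{T-Z-stability} asserts precisely that any action in $\frmin(G,X)$ possessing this tower-decomposition property has the feature that its crossed product $C(X)\rtimes G$ is $\cZ$-stable. Therefore the set of actions in $\frmin(G,X)$ whose crossed product is $\cZ$-stable contains $\cC$ and is consequently comeager.

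For the nonemptiness clause I would observe that $\frmin(G,X)$ is a nonempty Polish space: it is Polish by the discussion preceding Theorem~\ref{T-dense G delta}, and it is nonempty because a countably infinite amenable group always admits a free minimal action on the Cantor set (for instance, a zero-entropy one, as established by Downarowicz--Huczek--Zhang \cite{DowHucZha16}). Since by the Baire category theorem a comeager subset of a nonempty Polish space is nonempty, the final clause follows.

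There is essentially no obstacle here, as all of the substance has already been carried out in the proofs of Theorems~\ref{T-dense G delta} and \ref{T-Z-stability}; the only point deserving a word of care is that ``comeager'' not be vacuous, which is why one must separately record that $\frmin(G,X)\neq\emptyset$.
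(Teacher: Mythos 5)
Your proposal is correct and follows exactly the paper's route: the paper proves Theorem~\ref{T-comeager} by combining Theorem~\ref{T-dense G delta} (the tower-decomposition property is generic in $\frmin(G,X)$) with Theorem~\ref{T-Z-stability} (that property implies $\cZ$-stability of the crossed product). Your extra remark that $\frmin(G,X)\neq\emptyset$ (via the Downarowicz--Huczek--Zhang existence of free minimal actions, as noted in the paper's introduction) is a reasonable explicit justification of the ``in particular nonempty'' clause, which the paper leaves implicit.
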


\begin{question}\label{Q-nuclear dimension}
Do any of the crossed products in Theorem~\ref{T-comeager} have tracial state space with compact
extreme boundary (from which we would be able to conclude finite nuclear dimension by \cite{BosBroSatTikWhiWin16}
and hence classifiability)?
For $G=\Zb$ a generic action in $\frmin (G,X)$
is uniquely ergodic, so that the crossed product has a unique tracial state \cite{Hoc08}.
However, already for $\Zb^2$ nothing of this nature seems to be known.
On the other hand, it is known that the crossed products of free minimal actions of finitely generated
nilpotent groups on compact metrizable spaces of finite covering dimension
have finite nuclear dimension, and in particular are $\cZ$-stable \cite{SzaWuZac14}.
\end{question}

\end{document}